
\documentclass[letterpaper,11pt]{amsart}

\usepackage{graphicx,color,epsfig}
\usepackage{amsfonts,amsmath,amssymb,amsthm}

\setlength{\textheight}{9in}
\setlength{\textwidth}{6.5in}
\setlength{\oddsidemargin}{-2pt}
\setlength{\evensidemargin}{-2pt}
\setlength{\topmargin}{-.5in}
\setlength{\headsep}{24pt}
\setlength{\parskip}{0.15in}

\setlength{\textheight}{9in}

\raggedbottom

\newtheorem{thm}{Theorem}[section]

\newtheorem{lem}[thm]{Lemma}
\newtheorem{cor}[thm]{Corollary}

\newtheorem{asm}{Assumption}

\theoremstyle{remark}
\newtheorem{rem}[thm]{Remark}

\newtheorem{ex}{Example}[section]

\theoremstyle{definition}

\newcommand{\ra}{\rightarrow}

\newcommand{\N}{\mathbb N}     
\newcommand{\R}{\mathbb R}     
\newcommand{\Z}{\mathbb Z}     

\renewcommand{\a}{\alpha}

\renewcommand{\d}{\delta}

\newcommand{\e}{\varepsilon}

\renewcommand{\l}{\lambda}
\renewcommand{\L}{\Lambda}

\newcommand{\s}{\sigma}

\renewcommand{\k}{\kappa}




\newcommand{\bigo}{\mathcal{O}}

\newcommand{\fl}[1]{\lfloor #1 \rfloor}  
\newcommand{\ind}[1]{ \mathbf{1}_{ \{ #1 \} } } 

\newcommand{\be}{\begin{equation}}
\newcommand{\ee}{\end{equation}}



\newcommand{\w}{\omega}              
\newcommand{\E}{\mathbb{E}}          
\newcommand{\Pv}{\mathbf{P}}		
\newcommand{\Ev}{\mathbf{E}}


\begin{document}

\title[Large deviations and slowdowns]{Large deviations and slowdown asymptotics for one-dimensional excited random walks}
\author{Jonathon Peterson}
\address{Jonathon Peterson \\  Purdue University \\ Department of Mathematics \\ 150 N University Street \\ West Lafayette, IN  47907 \\ USA}
\email{peterson@math.purdue.edu}
\urladdr{http://www.math.purdue.edu/~peterson}
\thanks{J. Peterson was partially supported by National Science Foundation grant DMS-0802942.}

\subjclass[2000]{Primary: 60K35; Secondary: 60F10, 60K37}
\keywords{Excited random walk, large deviations}

\date{\today}

\begin{abstract}
We study the large deviations of excited random walks on $\mathbb{Z}$. We prove a large deviation principle for both the hitting times and the position of the random walk and give a qualitative description of the respective rate functions. When the excited random walk is transient with positive speed $v_0$, then the large deviation rate function for the position of the excited random walk is zero on the interval $[0,v_0]$ and so probabilities such as $P(X_n < nv)$ for $v \in (0,v_0)$ decay subexponentially. We show that rate of decay for such slowdown probabilities is polynomial of the order $n^{1-\delta/2}$, where $\delta>2$ is the expected total drift per site of the cookie environment.  
\end{abstract}

\maketitle

\section{Introduction}

In this paper we study the large deviations for one-dimensional excited random walks. Excited random walks are a model for a self-interacting random walk, where the transition probabilities depend on the number of prior visits of the random walk to the current site. 
The most general model for excited random walks on $\Z$ is the following. 
Let $\Omega = [0,1]^{\Z \times \N}$, and for any element $\w = \{\w_{i}(j) \}_{i \in \Z,\, j \geq 1} \in \Omega$ we can define an excited random walk $X_n$ by letting $\w_i(j)$ be the probability that the random walk moves to the right upon its $j$-th visit to the site $i\in \Z$. More formally, we will let $P_\w(X_0 = 0)$ and 
\[
 P_\w(X_{n+1} = X_n+1 | \, \mathcal{F}_n ) 
= 1- P_\w(X_{n+1} = X_n-1 | \, \mathcal{F}_n )
= \w_x\left( \# \{k \leq n: \, X_k = x \} \right),
\]
where $\mathcal{F}_n = \s(X_0,X_1,\ldots,X_n)$. 
Note that the excited random walk $X_n$ is not a Markov chain since the transition probabilities depend on the entire past of the random walk and not just the current location. 

Excited random walks are also sometimes called \emph{cookie random walks}, since one imagines a stack of ``cookies'' at every site which each induce a specific bias to the walker. 
When the walker visits the site $x$ for the $i$-th time, he eats the $i$-th cookie which causes his next step to be as a simple random walk with parameter $\w_x(i)$. For this reason we will also refer to $\w = \{\w_i(j)\}_{i\in\Z, \, j \geq 1}$ as a \emph{cookie environment}. 

We can also assume that the cookie environment $\w$ is first chosen randomly. That is, let $\Pv$ be a probability distribution on the space of cookie environments $\Omega$, and define a new measure on the space of random walk paths $\Z^{\Z_+}$ by averaging over all cookie environments. That is, let
\[
 P(\cdot) = \int_\Omega P_\w(\cdot) \, \Pv(d\w). 
\]
For a fixed cookie environment $\w$, the law $P_\w$ is referred to as the \emph{quenched} law of the excited random walk, and $P$ is called the \emph{averaged} law of the excited random walk.

Most of the results for excited random walks make the assumption that there are only finitely many cookies per site. That is, there exists an $M$ such that $\w_i(j) = 1/2$ for any $i\in\Z$ and $j > M$ so that after $M$ visits to any site the transitions are like a simple symmetric random walk. 
\begin{asm}\label{Mcookieasm}
There exists an integer $M<\infty$ such that there are almost surely only $M$ cookies per site. That is, $\Pv(\Omega_M) = 1$, where 
\[
 \Omega_M = \Omega \cap \{ \w: \, \w_i(j) = 1/2, \, \forall i\in\Z,\, \forall j>M \}. 
\]
\end{asm}
\noindent
We will also make the common assumption that the cookie environment is i.i.d.\ in the following sense. 
\begin{asm}\label{iidasm}
 The distribution $\Pv$ is such that the sequence of cookie environments at each site $\{ \w_i(\cdot) \}_{i\in\Z}$ is i.i.d.
\end{asm}
Finally, we will make the following non-degeneracy assumption on cookie environments. 
\begin{asm}\label{easm}
 With $M$ as in Assumption \ref{Mcookieasm}, 
\[
 E\left[ \prod_{j=1}^M \w_0(j) \right] > 0 \quad \text{and} \quad E\left[ \prod_{j=1}^M (1-\w_0(j)) \right] > 0.
\]
\end{asm}

Excited random walks were first studied by Benjamini and Wilson in \cite{bwERW}, where they considered the case of deterministic cookie environments with one cookie per site (that is $M=1$). The focus of Benjamini and Wilson was mainly on the $\Z^d$ case, but in the special case of $d=1$ they showed that excited random walks with one cookie per site are always recurrent. The model was further generalized by Zerner in \cite{zMERW} to allow for multiple cookies per site and for randomness in the cookie environment, but with the restriction that all cookies induced a non-negative drift (that is $\w_i(j) \geq 1/2$). Recently the model of excited random walks was further generalized by Zerner and Kosygina to allow for cookies with both positive and negative drifts \cite{kzPNERW}. 

The recurrence/transience and limiting speed for one-dimensional excited random walks have been studied in depth under the above assumptions. 
A critical parameter for describing the behavior of the excited random walk is the expected total drift per site
\be\label{ddef}
 \d = \Ev\left[ \sum_{i\geq 1} ( 2\w_0(j) - 1) \right] = \Ev\left[ \sum_{i=1}^M ( 2\w_0(j) - 1) \right].
\ee
Zerner showed in \cite{zMERW} that excited random walks with all cookies $\w_i(j)\geq 1/2$ are transient to $+\infty$ if and only if $\d>1$. 
Additionally, Zerner showed that the limiting speed $v_0 = \lim_{n\ra\infty} X_n / n$ exists, $P$-a.s., but was not able to determine when the speed is non-zero. Basdevant and Singh solved this problem in \cite{bsCRWspeed} where they showed that $v_0>0$ if and only if $\d>2$. These results for recurrence/transience and the limiting speed were given only for cookies with non-negative drift but were recently generalized by Kosygina and Zerner \cite{kzPNERW} to the general model we described above that allows for cookies with both positive and negative drifts. In summary, under Assumptions \ref{Mcookieasm} -- \ref{easm}, the following results are known. 
\begin{itemize}
 \item $X_n$ is recurrent if and only if $\d\in[-1,1]$. Moreover, 
\[
 \lim_{n\ra\infty} X_n = 
\begin{cases}
 -\infty & \text{ if } \d< -1 \\
 +\infty & \text{ if } \d> 1, 
\end{cases}
\qquad P\text{-a.s.}
\]
 \item There exists a constant $v_0$ such that $\lim_{n\ra\infty} X_n/n = v_0$, $P$-a.s. Moreover, $v_0 = 0$ if and only if $\d \in [-2,2]$. 
\end{itemize}
Limiting distributions for excited random walks are also known with the type of rescaling and limiting distribution depending only on the parameter $\d$ given in \eqref{ddef}. The interested reader is referred to the papers \cite{bsRGCRW,kzPNERW,kmLLCRW,dCLTERW,dkSLRERW} for more information on limiting distributions.

\subsection{Main Results}
In this paper, we are primarily concerned with the large deviations of excited random walks. In a similar manner to the approach used for large deviations  of random walks in random environments, we deduce a large deviation principle for $X_n/n$ from a large deviation principle for $T_n/n$, where 
\[
 T_n = \inf\{ k \geq 0: \, X_k = n \}, \quad n \in \Z
\]
are the hitting times of the excited random walk. 
However, we do not prove a large deviation principle for the hitting times directly.
Instead, we use an associated branching process with migration $V_i$ that has been used previously in some of the above mentioned papers on the speed and limiting distributions for excited random walks \cite{bsCRWspeed, kzPNERW, kmLLCRW}. We prove a large deviation principle for $n^{-1}\sum_{i=1}^n V_i$ and use this to deduce a large deviation principle for $T_n/n$ which in turn implies the following large deviation principle for $X_n/n$. 

\begin{thm}\label{LDPXn}
The empirical speed of the excited random walk $X_n/n$ satisfies a large deviation principle with rate function $I_X$ defined in \eqref{IXdef}. 
That is, for any open set $G \subset [-1,1]$, 
\be\label{LDPXnlb}
 \liminf_{n\ra\infty} \frac{1}{n} \log P( X_n/n \in G) \geq - \inf_{x \in G} I_X(x), 
\ee
and for any closed set $F \subset [-1,1]$, 
\[
 \limsup_{n\ra\infty} \frac{1}{n} \log P( X_n/n \in F) \leq - \inf_{x \in F} I_X(x). 
\]
\end{thm}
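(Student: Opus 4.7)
The plan is to follow the three-step strategy outlined just before the theorem. First, encode the hitting time $T_n$ via the associated branching-process-with-migration $\{V_i\}$ used in \cite{bsCRWspeed, kzPNERW, kmLLCRW}, where $V_i$ counts down-crossings from level $i+1$ to $i$ before the walk reaches $n$, giving an identity of the form $T_n = n + 2\sum_{i=0}^{n-1} V_i$. Second, prove a large deviation principle for $n^{-1}\sum_{i=1}^n V_i$; this then yields the LDP for $T_n/n$ by an affine contraction. Third, transfer the LDP from $T_n/n$ to $X_n/n$ via the standard duality $\{X_n \geq k\} = \{T_k \leq n\}$ together with its negative counterpart.

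For the first main step, the sequence $\{V_i\}_{i \geq 0}$ is a time-homogeneous Markov chain on $\Z_+$ whose transition kernel is determined by the i.i.d.\ cookie laws $\w_0(\cdot)$. Under Assumptions \ref{Mcookieasm}--\ref{easm} this chain is irreducible and aperiodic, and it admits a natural regenerative structure at returns to $0$. I would establish the LDP either by (a) a subadditive/renewal argument applied to the regenerative decomposition of $\sum V_i$ between successive zeros, or (b) computing the limiting logarithmic moment generating function
\[
\Lambda(\lambda) \;=\; \lim_{n\to\infty} \frac{1}{n}\log \Ev\!\left[\exp\!\left(\lambda \sum_{i=1}^n V_i\right)\right]
\]
via a Perron--Frobenius analysis of the tilted transition operator $K_\lambda f(v) = \Ev[e^{\lambda V_1} f(V_1) \mid V_0 = v]$ on a suitably weighted space, and then applying Gärtner--Ellis once $\Lambda$ is verified to be essentially smooth on the interior of its effective domain. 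The rate function for $T_n/n$ is then $I_T(t) = I_V((t-1)/2)$ on $[1,\infty)$ by contraction.

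For the third step, I would use $\{X_n \geq nv\} = \{T_{\lceil nv\rceil} \leq n\}$ for $v > 0$, so that both large-deviation bounds for $X_n/n$ on $(0,1]$ reduce to bounds for $T_n/n$ under the substitution $I_X(v) = v\,I_T(1/v)$. For $v < 0$ one uses the analogous LDP for the leftward hitting times, obtainable by applying the same branching-process representation to the walk with roles of left and right interchanged. The endpoint $v = 0$ is obtained by continuity and by matching the infima from both sides, and the fact that $I_X$ vanishes on $[0,v_0]$ when $\d > 2$ follows from the separate slowdown estimate promised by the abstract.

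The hard part will be the first step. The chain $V_i$ is not uniformly ergodic and its invariant measure has a polynomial tail when $\d \in (2, \infty)$, so the tilted operator $K_\lambda$ generically has $\Lambda(\lambda) < \infty$ only for $\lambda$ below a finite critical value $\lambda_c$, and the essential smoothness needed for the full Gärtner--Ellis upper bound fails at $\lambda_c$. This is precisely the source of the polynomial (not exponential) decay of slowdown probabilities, so one must be careful to prove the LDP \emph{upper} bound directly near the slowdown regime rather than relying on a naïve application of Gärtner--Ellis. The renewal structure at visits of $V_i$ to $0$, inherited from the excursion decomposition of the underlying walk, should provide enough independence to push through both bounds.
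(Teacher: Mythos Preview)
Your overall three-step strategy matches the paper's approach exactly: LDP for $n^{-1}\sum V_i$, then for $T_n/n$ via affine contraction, then for $X_n/n$ via hitting-time duality. Two concrete issues, however, would cause your argument to fail as written.

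First, the relation $\{X_n \geq nv\} = \{T_{\lceil nv\rceil} \leq n\}$ is false: only the inclusion $\{X_n \geq nv\} \subset \{T_{\lceil nv\rceil} \leq n\}$ holds, since the walk may reach $\lceil nv\rceil$ and then backtrack below $nv$ by time $n$. This inclusion suffices for the upper bound, but for the \emph{lower} bound the paper instead uses the nearest-neighbor property to obtain the two-sided estimate $P(|X_n - nx| < \e n) \geq P(|T_{\lceil nx\rceil} - n| < \e n - 1)$. The case $x=0$ is also not handled ``by continuity'': one still needs $P(|X_n| < \e n)$ to be subexponentially bounded below, and the paper supplies this via the explicit construction of Example~\ref{slowdownex}. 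Your appeal to the slowdown theorem to deduce $I_X\equiv 0$ on $[0,v_0]$ is circular; in the paper this vanishing is read off directly from the shape of $I_V$.

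Second, your identity $T_n = n + 2\sum_{i=0}^{n-1} V_i$ omits the contribution from excursions to the left of the origin. The correct relation \eqref{TnVirep} is $T_n \overset{\mathcal D}{=} n + 2\sum_{i=1}^n V_i + 2\sum_{i\geq 1} V_i^{(n)}$, with $V_i^{(n)}$ an auxiliary branching process started from $V_n$ but with one fewer immigrant. The extra term is dealt with by conditioning on $V_n=0$ for the lower bound (which kills $V^{(n)}$) and by simply dropping it for the upper bound on $\{T_n \leq tn\}$. This is precisely why the paper needs the LDP for $\sum V_i$ \emph{conditioned} on $V_n=j$ (quoted from Nummelin--Ney) alongside the unconditioned one, the latter requiring a separate argument (Lemma~\ref{suptlem}) because, as you correctly anticipate, the chain mixes too slowly for off-the-shelf G\"artner--Ellis hypotheses. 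One last correction: your critical $\lambda_c$ is actually $0$, since $\Lambda_V(\lambda)=\infty$ for \emph{every} $\lambda>0$ (Lemma~\ref{infdomain}); this is what makes $I_V$ monotone non-increasing and reduces the upper bound to half-line events.
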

\begin{rem}
After the initial draft of this paper was completed, it was noted that a general large deviation principle for certain non-Markovian random walks due to Rassoul-Agha \cite{rLDPNMRW} can be used to prove Theorem \ref{LDPXn} in certain cases. 
Thus, it is necessary to point out some of the differences with the current paper. 
\begin{itemize}
 \item In \cite{rLDPNMRW} the random walks are assumed to be \emph{uniformly elliptic}, which in the context of this paper would require $\w_i(j) \in [c,1-c]$ for all $i\in\Z$, $j\geq 1$ and some $c>0$. In contrast, we only assume the weaker condition in Assumption \ref{easm}. 
 \item The results of \cite{rLDPNMRW} only apply directly to excited random walks with deterministic cookie environments.
If the cookie environments are allowed to be random and satisfying Assumption \ref{iidasm}, then a technical difficulty arises in satisfying one of the conditions 
for the large deviation principle in \cite{rLDPNMRW}. 
Specifically, the transition probabilities $q(\mathrm{w},z)$ for the shifted paths as defined in \cite{rLDPNMRW} do not appear to be continuous in $\mathrm{w}$ for the required topology. 
We suspect, however, that the techniques of \cite{rLDPNMRW} could be adapted to apply to this case as well. 
 \item The formulation of the large deviation rate function in \cite{rLDPNMRW} is difficult to work with and the only stated properties of the rate function are convexity and a description of the zero set. In contrast, our method gives a more detailed description of the rate function (see Lemma \ref{IXprop} and Figure \ref{IXfig}). 
 \item The method in \cite{rLDPNMRW} does not also give a large deviation principle for the hitting times of the random walk, though one could use an argument similar to that in Section \ref{LDPXsec} below to deduce a large deviation principle for the hitting times from the large deviation principle for the location of the random walk. 
\end{itemize}
\end{rem}

As mentioned in the above remark, the formulation of the rate function $I_X$ given in the proof of Theorem \ref{LDPXn} allows us to give a good qualitative description of the rate function (see Lemma \ref{IXprop}). 
One particularly interesting property is that if $\d>2$ (so that the limiting speed $v_0>0$) then $I_X(x) = 0$ when $x \in [0,v_0]$. Thus, probabilities of the form $P(X_n < nx)$ decay subexponentially if $x \in (0,v_0)$. In fact, as the following example shows, one can see quite easily that such slowdown probabilities must have a subexponential rate of decay. 
\begin{ex}\label{slowdownex}
We exhibit a naive strategy for obtaining a slowdown of the excited random walk. 
Consider the event where the excited random walk first follows a deterministic path that visits every site in $[0,n^{1/3})$ $M$ times (so that no cookies remain in the interval) and then the random walk stays in the interval $[0,n^{1/3})$ for $n$ steps.
The probabilistic cost of forcing the random walk to follow the deterministic path at the beginning is $e^{-c'Mn^{1/3}}$ for some $c'>0$. Then, since there are no cookies left in the interval, the probability of then staying in $[0,n^{1/3})$ for $n$ steps before exiting to the right is a small deviation computation for a simple symmetric random walk. The probability of this event can be bounded below by $C e^{-c'' n^{1/3}}$ for some $C,c''>0$ (see Theorem 3 in \cite{mSmDev}). Thus, the total probability of the above event for the excited random walk is at least $C e^{-c n^{1/3}}$. 
\end{ex}
The example above shows that $P( X_n < xn )$ decays slower than a stretched exponential. However, this strategy turns out to be far from the optimal way for obtaining such a slowdown.
The second main result of this paper is that the true rate of decay for slowdowns is instead polynomial of the order $n^{1-\d/2}$.
\begin{thm}\label{Slowdownthm}
 If $\d>2$, then
\be\label{Xnslowdown}
 \lim_{n\ra\infty} \frac{ \log P(X_n < nx) } {\log n} = 1 - \frac{\d}{2}, \quad \forall x \in (0,v_0), 
\ee
and
\be\label{Tnslowdown}
 \lim_{n\ra\infty} \frac{ \log P( T_n > nt ) }{ \log n } = 1 -\frac{\d}{2}, \quad \forall t > 1/v_0. 
\ee
\end{thm}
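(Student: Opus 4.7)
The plan is to exploit the branching process with migration $V=(V_i)$ that the paper has already introduced to study hitting times. Recall that under this coupling one has a representation of the form $T_n = n + 2\sum_{i=0}^{n-1} V_i$, where $V_i$ counts backward steps performed by the walker at site $i+1$ before time $T_n$. When $\d>2$ the chain $V$ is positive recurrent with a unique stationary distribution $\pi$, finite mean $E_\pi[V] = (1-v_0)/(2v_0)$, and $T_n/n \to 1/v_0$ by the ergodic theorem. The slowdown event $\{T_n > nt\}$ with $t>1/v_0$ is therefore a large deviation of the ergodic average $n^{-1}\sum_{i=1}^n V_i$ above its mean, and the slowdown event $\{X_n < nx\}$ for $x\in(0,v_0)$ reduces, up to transience corrections, to $\{T_{\lceil nx\rceil}>n\}$, which is the same kind of large deviation with parameter $1/x > 1/v_0$.

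The first and central step is to show that the stationary distribution of $V$ has a polynomial right tail of the exact order $\d/2$, namely
\[
\pi(V>y) \asymp y^{-\d/2} \quad \text{as } y\to\infty.
\]
This exponent is precisely the tail index that appears in the stable limit laws of \cite{kmLLCRW,dkSLRERW}, and the idea is to obtain it through an implicit renewal (Kesten--Goldie type) analysis applied to the branching-with-migration transition of $V$, comparing to the Bessel-type diffusion approximation used by Basdevant and Singh in \cite{bsCRWspeed}. The second step is to upgrade this marginal tail estimate to a ``one big jump'' large-deviation bound
\[
P\!\left( \sum_{i=1}^n V_i > an \right) \asymp n\, \pi(V>an) \asymp n^{1-\d/2},
\]
valid for any $a>E_\pi[V]$. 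For the upper bound, truncate each $V_i$ at level $\e n$ and combine a Fuk--Nagaev/LLN estimate for the truncated bulk (using finite $(\d/2-\e)$-moments of $V$ under $\pi$) with a union bound for the contribution of the at most $O(1)$ large summands. For the lower bound, use the naive strategy of forcing a single $V_{i_0}$ to exceed $an$ at one of $\Theta(n)$ choices of site, each event costing $\asymp n^{-\d/2}$ by the tail estimate. This proves \eqref{Tnslowdown}, and the inclusion $\{X_n < nx\}\subseteq\{T_{\lceil nx\rceil}>n\}$ together with a transience estimate controlling the overshoot (the walk does not exceed $nx$ by more than $o(n)$ on the slowdown event, which costs probability at most $n^{1-\d/2}$ by the same argument applied at shorter length scales) gives \eqref{Xnslowdown}.

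The main obstacle, I expect, is establishing the sharp polynomial tail $\pi(V>y) \asymp y^{-\d/2}$ with matching upper and lower bounds: the branching-with-migration chain does not satisfy a clean multiplicative recursion of the kind that governs RWRE, so Kesten's theorem does not apply off the shelf; instead one must work with the generating function or the harmonic moments of the $V$ chain and connect them to the $\d$-dependent Bessel scaling. A secondary difficulty is dependence: while $V$ is Markovian, the relevant sum $\sum V_i$ is evaluated under the law $P$ where $V_0 = 0$ rather than under $\pi$, so one needs to couple the non-stationary chain to its stationary version uniformly in $n$ in order to pass from the marginal tail to the displayed sum estimate, and one needs to verify that joint large-deviation scenarios involving two or more simultaneously large $V_i$'s are genuinely negligible at the scale $n^{1-\d/2}$.
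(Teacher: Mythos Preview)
The central step of your proposal is incorrect: the stationary tail of $V$ is \emph{not} of order $y^{-\d/2}$. Heuristically, an excursion of $V$ between successive returns to $0$ that reaches height $m$ has duration of order $m$ and area of order $m^2$; combined with $P(\s_1>s)\sim C_1 s^{-\d}$ this gives $E[\#\{i\le\s_1:V_i\ge y\}]\asymp y^{1-\d}$ and hence $\pi(V\ge y)\asymp y^{-(\d-1)}$, which for $\d>2$ is strictly lighter than $y^{-\d/2}$. The exponent $\d/2$ is the tail index of the excursion \emph{area} $W_1=\sum_{i\le\s_1}V_i$, not of the one-dimensional marginal. This misidentification breaks the rest of the argument. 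Because consecutive $V_i$'s are strongly positively correlated, the ``one big jump'' that produces $\{\sum_{i\le n}V_i>an\}$ is a single large excursion, not a single large $V_{i_0}$: your lower-bound strategy of forcing $V_{i_0}>an$ requires the chain to climb to height of order $n$, an event of probability $\asymp n^{-\d}$ per excursion, yielding only $n^{1-\d}\ll n^{1-\d/2}$. The optimal event instead forces one excursion to have \emph{area} of order $n$ (height only of order $\sqrt{n}$), at cost $n^{-\d/2}$. A Fuk--Nagaev bound applied to the individual $V_i$'s faces the same obstruction and cannot deliver the right exponent without first passing to i.i.d.\ blocks.

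The paper's proof avoids all of this by working from the start with the regeneration decomposition: the block sums $W_k-W_{k-1}$ are i.i.d.\ with the already-known tail $P(W_1>x)\sim C_2 x^{-\d/2}$ from \cite{kmLLCRW}, and $\sum_{i\le n}V_i$ lies between $W_{k(n)-1}$ and $W_{k(n)}$ with $k(n)\sim n/E[\s_1]$. A standard large-deviation estimate for i.i.d.\ heavy-tailed sums (Lemma~\ref{heavyLD}) then gives \eqref{Tnslowdown} directly, with no new tail computation required. For \eqref{Xnslowdown}, note also that your inclusion is stated in the wrong direction: one has $\{T_{\lceil nx\rceil}>n\}\subset\{X_n<\lceil nx\rceil\}$, which supplies the lower bound; the upper bound needs a separate backtracking estimate (Lemma~\ref{backtracklem} in the paper), contributing only $n^{1-\d}\ll n^{1-\d/2}$.
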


\subsection{Comparison with RWRE}
Many of the prior results for one-dimensional excited random walks are very similar to the corresponding statements for random walks in random environments (RWRE). For instance, both models can exhibit transience with sublinear speed and they have the same types limiting distributions for the  hitting times and the location of the random walk \cite{kzPNERW,sRWRE,kksStable}. 
Thus, it is interesting to compare the results of this paper with what is known for one-dimensional RWRE. 

Large deviations for one-dimensional RWRE (including a qualitative description of the rate functions) were studied in \cite{cgzLDP} and subexponential slowdown asymptotics for ballistic RWRE similar to Theorem \ref{Slowdownthm} were studied in \cite{dpzASlowdown}. The similarities to the current paper are greatest when the excited random walk has $\d>2$ and the RWRE is transient with positive speed and ``nestling'' (i.e., the environment has positive and negative drifts). In this case, the large deviation rate function for either model is zero on the interval $[0,v_0]$, where $v_0 = \lim_{n\ra\infty} X_n/n$ is the limiting speed. 
Moreover, the polynomial rates of decay of the slowdown probabilities are related to the limiting distributions of the random walks in the same way. For instance, in either model if the slowdown probabilities decay like $n^{1-\a}$ with $\a \in (1,2)$ then $n^{-1/\a}(X_n -n v_0)$ converges in distribution to an $\a$-stable random variable \cite{kzPNERW,kksStable}.

An interesting difference in the rate functions for excited random walks and RWRE is that $I'_X(0) = 0$ in the present paper, while for transient RWRE the left and right derivatives of the rate function are not equal at the origin \cite{cgzLDP}.
Since (in both models) $I_X$ is defined in terms of the large deviation rate function $I_T(t)$ for the hitting times $T_n/n$, this is related to the fact that
$\inf_t I_T(t) = 0$ for excited random walks (see Lemma \ref{LDPTn}) while the corresponding rate function for the hitting times of RWRE is uniformly bounded away from $0$ if the walk is transient to the left.

\subsection{Outline}
The structure of the paper is as follows. 
In Section \ref{Visec} we define the associated branching process with migration $V_i$, mention its relationship to the hitting times of the excited random walk, and prove a few basic properties about the process $V_i$. Then in Section \ref{LDPVsec} we prove a large deviation principle for the empirical mean of the process $V_i$ and prove some properties of the corresponding rate function. 
The large deviation principle for the empirical mean of the process $V_i$ is then used to deduce large deviation principles for $T_n/n$ and $X_n/n$ in Sections \ref{LDPTsec} and \ref{LDPXsec}, respectively.
Finally, in Section \ref{Slowdownsec} we prove the subexponential rate of decay for slowdown probabilities.

\section{A related branching process with random migration}\label{Visec}

In this section we recall how the hitting times $T_n$ of the excited random walk can be related to a branching process with migration. 
%
We will construct the related branching process with migration using the ``coin tossing'' construction that was given in \cite{kzPNERW}. 
Let a cookie environment $\w = \{\w_i(j)\}_{i\in\Z, j\geq 1}$ be fixed, and let $\{\xi_{i,j} \}_{i\in\Z, j\geq 1}$ be an independent family of Bernoulli random variables with $P(\xi_{i,j} = 1) = \w_i(j)$. For $i$ fixed, we say that the $j$-th Bernoulli trial is a ``success'' if $\xi_{i,j} = 1$ and a ``failure'' otherwise. Then, let $F^{(i)}_m$ be the number of failures in the sequence $\{\xi_{i,j}\}_{j\geq 1}$ before the $m$-th success. That is, 
\[
 F^{(i)}_m = \min \left\{ \ell \geq 1: \sum_{j=1}^\ell \xi_{i,j} = m \right\} - m.
\]
Finally, we define the branching process with migration $\{V_i \}_{i\geq 1}$ by
\[
 V_0 = 0, \quad\text{and}\quad V_{i+1} = F^{(i)}_{V_i + 1}, \, \text{ for } i \geq 0. 
\]
If the $\w_i(j)$ were all equal to $1/2$ then the process $\{V_i\}$ would be a critical Galton-Watson branching process with one additional immigrant per generation. Allowing the first $M$ cookie strengths at each site to be different than $1/2$ has the effect of making the migration effect more complicated (in particular, the migration in each generation is random and can depend on the current population size). We refer the interested reader to \cite{bsCRWspeed} for a more detailed description of the interpretation of $V_i$ as a branching process with migration. 

In addition to the above branching process with migration, we will also need another branching process with a random initial population and one less migrant each generation. For any $n\geq 1$, let $V^{(n)}_0 = V_n$ where $V_n$ is constructed as above and let $V^{(n)}_i = F^{(n+i-1)}_{V^{(n)}_{i-1}}$, where we let $F^{(i)}_0 = 0$. 
Note that with this construction, we have that $V^{(n)}_i \leq V_{n+i}$ for all $i$. Moreover, while the Markov chain $V_i$ is irreducible, the lack of the extra migrant each generation makes $0$ an absorbing state for $V^{(n)}_i$. 

The relevance of the processes $\{V_i\}_{i\geq 0}$ and $\{V_i^{(n)}\}_{i\geq 0}$ to the hitting times $T_n$ of the excited random walk is the following. 
\be\label{TnVirep}
 T_n  \overset{\mathcal{D}}{=} n + 2 \sum_{i=1}^n V_i + 2 \sum_{i=1}^\infty V_i^{(n)}. 
\ee
To explain this relation let
$U^n_i = \# \{k\leq T_n \, : \, X_k = i, \, X_{k+1} = i-1 \}$ be the number of times the random walk jumps from $i$ to $i-1$ before time $T_n$. 
Then, it is easy to see that $T_n = n + 2 \sum_{i\leq n} U^n_i$ and \eqref{TnVirep} follows from the fact that
\be\label{UiVirep}
 (U^n_n, U^n_{n-1}, \ldots U^n_1, U^n_0, U^n_{-1}, U^n_{-2},\ldots ) \overset{\mathcal{D}}{=} (V_1, V_2, \ldots, V_{n-1}, V_n, V^{(n)}_1, V^{(n)}_2,\ldots).  
\ee
The details of the above joint equality in distribution can be found in \cite{bsCRWspeed} or \cite{kmLLCRW}. 
\begin{rem}
 Technically, the relation \eqref{TnVirep} is proved in \cite{bsCRWspeed} and \cite{kmLLCRW} only in the cases where $T_m < \infty$ with probability one. 
However, an examination of the proof shows that $P(T_n = k) = P(n + \sum_{i=1}^n V_i + 2 \sum_{i=1}^\infty V_i^{(n)} = k)$ for any finite $k$ and so both sides of \eqref{TnVirep} are infinite with the same probability as well. 
\end{rem}

\subsection{Regeneration structure}

We now define a sequence of regeneration times for the branching process $V_i$. Let $\s_0 = 0$ and for $k\geq 1$
\[
 \s_k = \inf \{i>\s_{k-1}: V_i = 0\}. 
\]
Also, for $k\geq 1$ let 
\[
 W_k = \sum_{i=1}^{\s_{k}} V_i
\]
be the total offspring of the branching process by the $k^{th}$ regeneration time.
The tails of $\s_1$ and $W_1$ were analyzed in \cite{kmLLCRW} in the case when $\d>0$. 
\begin{lem}[Theorems 2.1 and 2.2 in \cite{kmLLCRW}]
 If $\d>0$ then, 
\be\label{Wstails}
 P(\s_1 > x) \sim C_1 x^{-\d} \quad\text{and}\quad P(W_1 > x ) \sim C_2 x^{-\d/2} \quad \text{as } x\ra\infty. 
\ee
\end{lem}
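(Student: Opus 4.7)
The plan is to recognize $V_i$ as a near-critical branching process with migration and reduce both tail estimates to this structure. Once the $M$ cookies at a site are exhausted, each additional Bernoulli trial is fair, so $F^{(i)}_m$ behaves like a sum of $m$ i.i.d.\ Geometric$(1/2)$ random variables, which has mean $m$ and variance $2m$. Thus, for large population, one step of the recursion $V_{i+1} = F^{(i)}_{V_i+1}$ is approximately a critical Galton--Watson step with one immigrant. The parameter $\delta$ from \eqref{ddef} shows up precisely as the expected net contribution of the initial $M$ cookies: heuristically, $E[V_{i+1}-V_i \mid V_i]$ decays like $-\delta/(2V_i)$ when $V_i$ is large. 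This $-c/v$ drift structure is exactly the setting of Lamperti-type criteria for Markov chains, where polynomial tails arise naturally.

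First I would establish $P(\sigma_1 > x) \sim C_1 x^{-\delta}$. The key step is to pick a suitable test function (a power of $V_i$, tuned to match the $-\delta/(2V_i)$ drift) so that the resulting process is close to a martingale. A Doob transform / $h$-transform argument then converts $V_i$ conditioned on non-extinction up to time $x$ into a process at a deterministic scale. Alternatively, one can compare $V_i$ directly to a continuous-state critical branching process (a squared Bessel process with dimension $2 - \delta$), whose absorption time at $0$, starting from scale $1$, has the known polynomial tail with exponent $-\delta$. Either route requires careful control of the contribution of the first $M$ visits at each site, since those are what produce $\delta$ in the first place and cannot be made arbitrarily small in a single step.

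Given the tail of $\sigma_1$, I would obtain the tail of $W_1$ by a scaling argument. Conditional on $\sigma_1 > x$, the trajectory of $V_i$ stays at scale $x$ during a time window of length of order $x$: more precisely, $V_{\lfloor xt \rfloor}/x$ converges in distribution (as $x \to \infty$) to a positive diffusion on $[0,1]$, namely a squared Bessel bridge of appropriate dimension conditioned to avoid $0$. This yields $W_1 = \sum_{i=1}^{\sigma_1} V_i \asymp \sigma_1^2$ on the event $\{\sigma_1 \approx x\}$. Integrating, we get $P(W_1 > y) \sim P(\sigma_1 > \sqrt{y}) \sim C_1 y^{-\delta/2}$, yielding the second asymptotic with $C_2 = C_1$ up to a constant depending on the limiting diffusion.

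The main obstacle is the first step: extracting the precise exponent $\delta$ (and not merely some polynomial tail) from the Markov chain $V_i$. Rough stochastic comparisons are easy and yield polynomial tails with exponent of order $\delta$, but pinning down the exponent requires matching the asymptotic drift exactly, and in particular quantifying how the random and inhomogeneous migration from the first $M$ cookies averages out. This is where the \emph{i.i.d.} structure of the cookie environment (Assumption \ref{iidasm}) and the non-degeneracy (Assumption \ref{easm}) enter crucially, since they allow one to invoke renewal-type convergence for the migration across regenerations and ensure the scale is nontrivial.
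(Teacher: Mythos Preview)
The paper does not prove this lemma at all: it is quoted verbatim from Kosygina--Mountford \cite{kmLLCRW} (Theorems 2.1 and 2.2 there), so there is no ``paper's proof'' to compare against. Your sketch is thus an outline of a proof of a result the present paper takes as a black box.

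That said, your sketch contains a concrete error in the drift computation that would derail the argument if carried out. You claim $E[V_{i+1}-V_i \mid V_i=v]$ behaves like $-\d/(2v)$ for large $v$. In fact it is asymptotically \emph{constant}: once $v \geq M$, the first $M$ Bernoulli trials contribute $M - S_M$ failures and $S_M$ successes (with $E[S_M] = (M+\d)/2$ by \eqref{ddef}), and the remaining $v+1-S_M$ successes each bring a Geometric$(1/2)$ number of failures with mean $1$, so
\[
E[V_{i+1}\mid V_i = v] \;=\; E[M - S_M] + E[v+1-S_M] \;=\; v + 1 - \d,
\]
i.e.\ the drift is $1-\d$, not $-\d/(2v)$. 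The Lamperti picture you describe applies to $\sqrt{V_i}$, not to $V_i$ itself: $\sqrt{V_i}$ has asymptotically bounded variance and drift of order $(1-\d)/(2\sqrt{V_i})$. Relatedly, the diffusion limit of $V_{\lfloor nt\rfloor}/n$ solves $dY = (1-\d)\,dt + \sqrt{2Y}\,dB$, so after rescaling it is a squared Bessel process of dimension $2(1-\d)$, not $2-\d$; the absorption-time tail of BESQ$(d)$ is $t^{-(1-d/2)}$, which with $d = 2-2\d$ gives the correct exponent $\d$.

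Your high-level strategy---diffusion approximation/Lamperti-type analysis for $\s_1$, then the scaling $W_1 \asymp \s_1^2$ for $W_1$---is indeed essentially the route taken in \cite{kmLLCRW}, and your identification of the main technical difficulty (controlling the irregular migration from the first $M$ cookies well enough to pin down the exact exponent and constant) is accurate. But the specific heuristics need to be corrected as above before the argument can be made rigorous.
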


Note that if the Markov chain $V_i$ is transient, then eventually $\s_k = W_k = \infty$ for all $k$ large enough. The following Lemma specifies the recurrence/transience properties of the Markov chain $V_i$. 
\begin{lem}\label{Vrt}
 The Markov chain $V_i$ is recurrent if and only if $\d\geq 0$ and positive recurrent if and only if $\d > 1$. 
\end{lem}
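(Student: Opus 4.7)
The plan is to establish the two ``if and only if'' statements separately, combining the tail asymptotics for $\s_1$ from \eqref{Wstails} with a direct Markov--chain analysis of the drift and spread of $V_i$. As a preliminary, $V_i$ is irreducible on $\Zp$: under Assumption~\ref{easm}, from any state $v\ge 0$ the transition $v\to 0$ has positive probability (a run of $v+1$ successes in $\{\xi_{i,j}\}_{j\ge 1}$, whose probability is bounded below by $(1/2)^{(v+1-M)_+}\E[\prod_{j=1}^{M\wedge(v+1)}\w_0(j)]>0$), and from $0$ every state $w\ge 0$ is reached in one step with positive probability using $\E[\prod_{j=1}^M(1-\w_0(j))]>0$.

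Once irreducibility is in hand, positive recurrence is equivalent to $\E[\s_1]<\infty$, and the tail asymptotic $\P(\s_1>x)\sim C_1 x^{-\d}$ immediately gives $\E[\s_1]=\int_0^\infty \P(\s_1>x)\,dx<\infty$ iff $\d>1$, proving one direction. The remaining positive--recurrence case $\d\le 0$ is not covered by \eqref{Wstails} but follows once we show the chain is not recurrent. Similarly, recurrence for $\d>0$ is immediate from the tail asymptotic, since it forces $\P(\s_1<\infty)=1$.

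It remains to handle $\d\le 0$. Here I would compute the first two moments of the increment directly from the coin-tossing construction. Using that $\xi_{i,j}$ is i.i.d.\ Bernoulli$(1/2)$ for $j>M$, and writing $S=\sum_{j=1}^M \xi_{i,j}$, one has $V_{i+1}=(M-S)+\mathrm{NegBin}(v+1-S,\tfrac12)$ given $V_i=v$ for $v\ge M$. This yields $\mu_1(v):=\E[V_{i+1}-v\mid V_i=v]\to 1-\d$ and $\mu_2(v):=\E[(V_{i+1}-v)^2\mid V_i=v]=2v+O(1)$ as $v\to\infty$. Lamperti's criterion for Markov chains on $\Zp$ then gives transience whenever $\liminf_{v\to\infty} 2v\mu_1(v)/\mu_2(v)>1$, which is exactly $\d<0$. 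The main obstacle is the critical case $\d=0$, where the leading Lamperti ratio equals $1$ and the first-order criterion is inconclusive; to settle it one invokes either a refined Lamperti criterion tracking the sub-leading $O(1/v)$ corrections in $\mu_1$ and $\mu_2$, or the classical recurrence theory for critical Galton--Watson processes with immigration, to place $\d=0$ on the recurrent side.
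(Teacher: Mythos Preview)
Your approach is genuinely different from the paper's. For $\d\neq 0$ you argue directly on $V_i$ via Lamperti-type drift criteria, computing $\mu_1(v)=1-\d$ and $\mu_2(v)=2v+O(1)$ from the coin-tossing construction; the paper instead couples $V_i$ to a modified chain $\widehat V_i$ and then to $Z_i=\widehat V_{i+1}-F_M^{(i)}$, identifies $Z_i$ as a branching process with Geometric$(1/2)$ offspring and migration of mean $1-\d$, and cites the branching-process-with-migration literature (via \cite{kzPNERW}) for the full recurrence classification. Your route is more self-contained for $\d\neq 0$ and avoids the auxiliary chains; the paper's route handles all $\d$ uniformly by reducing to a model whose classification is already known.

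The gap in your proposal is precisely the boundary case $\d=0$. You correctly identify that the first-order Lamperti ratio equals $1$ there, but neither of your proposed fixes is actually carried out. The ``refined Lamperti'' option is delicate: since $\mu_1(v)=1$ exactly and $\mu_2(v)=2v+c$ with $c$ depending on $M$ and on $\mathrm{Var}(\sum_{j\le M}\xi_{0,j})$, the sub-leading correction $2v\mu_1(v)-\mu_2(v)=-c$ need not have a fixed sign across all cookie distributions, so you would have to state and verify a specific higher-order criterion and check its hypotheses. Your second option, appealing to the critical Galton--Watson-with-immigration theory, is essentially the paper's strategy, and to make it rigorous you would need the reduction of $V_i$ to a clean branching process with migration---which is exactly what the paper's coupling to $Z_i$ accomplishes. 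There is also a secondary point: your argument for ``not positive recurrent when $\d\le 0$'' reads ``follows once we show the chain is not recurrent,'' which covers $\d<0$ but not $\d=0$; at $\d=0$ you would still owe a proof that $E[\s_1]=\infty$, since \eqref{Wstails} is stated only for $\d>0$.
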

\begin{proof}
The tail decay of $\s_1$ shows that $E[\s_1] < \infty$ if $\d> 1$ and $E[\s_1] = \infty$ if $\d \in (0,1]$. Therefore, it is enough to show that $V_i$ is recurrent if and only if $\d \geq 0$. 
This can be proven by an appeal to some previous results on branching proceses with migration as was done in \cite{kzPNERW}. 
A small difficulty arises in that the distribution of the migration that occurs before the generation of the $(i+1)$-st generation depends on the population of $i$-th generation. 
However, this can be dealt with in the same manner as was done in \cite{kzPNERW}. To see this, let 
$\widehat{V}_i$ be defined by 
\[
 \widehat{V}_0 = 0, \quad \text{and}\quad \widehat{V}_{i+1} = F^{(i)}_{(\widehat{V}_i + 1) \vee M}.
\]
Note that $V_i$ and $\widehat{V}_i$ have the same transition probabilities when starting from a site $k\geq M-1$, and thus $V_i$ and $\widehat{V}_i$ are either both recurrent or both transient.

Next, let $Z_i = \widehat{V}_{i+1} - F^{(i)}_M$.
We claim that $Z_i$ is recurrent if and only if $\widehat{V}_i$ is also recurrent. 
Since $0\leq Z_i \leq \widehat{V}_{i+1}$, $Z_i$ is recurrent if  $\widehat{V}_i$ is recurrent. To see the other implication, note that $Z_i = F^{(i)}_{(\widehat{V}_i + 1)\vee M} - F^{(i)}_M$ is the number of failures in $\{\xi_{i,j}\}_{j\geq 1}$  between the $M$-th success and success number $(\widehat{V}_i + 1)\vee M$. Therefore, $Z_i$ is independent of $F^{(i)}_M$. Since $F^{(i)}_M$ is an i.i.d.\ sequence, then
\[
 \sum_{i\geq 0} P\left( \widehat{V}_{i+1} = 0 \right) \geq \sum_{i\geq 0} P\left(Z_i=0, \, F_M^{(i)} = 0\right) = P\left(F_M^{(0)} = 0\right) \sum_{i\geq 0} P(Z_i = 0), 
\]
and thus $\widehat{V}_{i}$ is recurrent if $Z_i$ is recurrent.

Finally, it can be shown that $Z_i$ is a branching process with migration where the migration component has mean $1-\d$ and the branching component has offspring distribution that is Geometric($1/2$) (see Lemmas 16 and 17 in \cite{kzPNERW}). Then, previous results in the branching process with migration literature show that $Z_i$ is recurrent if and only if $\d \geq 0$ (see Theorem A and Corollary 4 in \cite{kzPNERW} for a summary of these results). 
\end{proof}

We close this section by noting that the above regeneration structure for the process $V_i$ can be used to give a representation for the limiting speed of the excited random walk.  
First note that, as was shown in \cite{bsCRWspeed}, the representation \eqref{TnVirep} can be used to show that when $\d>1$, 
\[
  \frac{1}{v_0} = \lim_{n\ra\infty} \frac{T_n}{n} = 1 + 2 \lim_{n\ra\infty} \frac{1}{n} \sum_{i=1}^n V_i.
\]
To compute the last limit above, first note that $\{ (W_k-W_{k-1}, \s_{k}-\s_{k-1}) \}_{k\geq 1}$ is an i.i.d.\ sequence and that 
the tail decay of $\s_1$ given in Theorem 2.1 of \cite{kmLLCRW} implies that $E[\s_1] < \infty$ whenever $\d>1$. 
Let $k(n)$ be defined by $\s_{k(n)-1} < n \leq \s_{k(n)}$. A standard renewal theory argument implies that 
\[
 \lim_{n\ra\infty} \frac{k(n)}{n} = \frac{1}{E[\s_1]}. 
\]
Since $ W_{k(n)-1} \leq \sum_{i=1}^n V_n \leq W_{k(n)} $
and $\lim_{k\ra\infty} W_k/k = E[W_1]$, this implies that
\[
 \lim_{n\ra\infty} \frac{1}{n} \sum_{i=1}^n V_i =  \frac{E[W_1]}{E[\s_1]}. 
\]
Therefore, we obtain the following formula for the limiting speed of transient excited random walks. 
\begin{lem}
If $\d>1$, then
\begin{equation}\label{speedformula}
 v_0 = \frac{\E[\s_1]}{\E[\s_1 + 2 W_1]}.
\end{equation}
\end{lem}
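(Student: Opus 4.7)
The starting point is the distributional equality \eqref{TnVirep}, which gives
\[
 \frac{T_n}{n} \;\overset{\mathcal{D}}{=}\; 1 + \frac{2}{n}\sum_{i=1}^n V_i + \frac{2}{n}\sum_{i=1}^\infty V_i^{(n)}.
\]
Since $\d>1$ implies transience to $+\infty$ and $X_n/n \to v_0$ a.s., we have $T_n/n \to 1/v_0$ a.s., so the plan is to compute the limits of the two sums on the right. The main term $\frac{1}{n}\sum_{i=1}^n V_i$ is handled by the regeneration structure: the pairs $\{(\s_k-\s_{k-1},\,W_k-W_{k-1})\}_{k\ge 1}$ are i.i.d., and the tail estimate $P(\s_1 > x)\sim C_1 x^{-\d}$ from \eqref{Wstails} gives $\E[\s_1]<\infty$ when $\d>1$. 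Defining $k(n)$ by $\s_{k(n)-1} < n \le \s_{k(n)}$, the elementary renewal theorem yields $k(n)/n \to 1/\E[\s_1]$ a.s. Since $V_i\ge 0$, one has the sandwich $W_{k(n)-1} \le \sum_{i=1}^n V_i \le W_{k(n)}$, and the strong law of large numbers applied to the i.i.d.\ increments of $W_k$ gives $W_k/k \to \E[W_1]$ a.s.\ (allowing the value $+\infty$ when $\E[W_1]=\infty$). Combining these two limits produces
\[
\lim_{n\to\infty}\frac{1}{n}\sum_{i=1}^n V_i \;=\; \frac{\E[W_1]}{\E[\s_1]}.
\]

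Next I would argue that the tail term $\frac{1}{n}\sum_{i=1}^\infty V_i^{(n)}$ is asymptotically negligible. By construction $V_i^{(n)}\le V_{n+i}$ and $0$ is absorbing for $V^{(n)}$, so once $V^{(n)}$ hits $0$ the sum terminates. Let $\tau_n=\inf\{i\ge 0:V^{(n)}_i=0\}$; since $V^{(n)}_i\le V_{n+i}$ and $V$ is recurrent (as $\d>0$) with regeneration times $\s_k$, we have $\tau_n \le \s_{k(n)+1}-n$, and hence
\[
\sum_{i=1}^\infty V_i^{(n)} \;\le\; \sum_{i=n+1}^{\s_{k(n)+1}} V_i \;\le\; W_{k(n)+1}-W_{k(n)-1}.
\]
By the renewal argument above, $k(n)\to\infty$ a.s., and since $(W_k-W_{k-1})$ are i.i.d., one gets $\big(W_{k(n)+1}-W_{k(n)-1}\big)/n \to 0$ a.s.\ (either because the increments have a finite mean when $\d>2$, or by a Borel–Cantelli/$\max_{k\le n}(W_k-W_{k-1})=o(n)$ type argument in general).

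Putting the two pieces together gives
\[
\frac{1}{v_0}\;=\;1+\frac{2\,\E[W_1]}{\E[\s_1]}\;=\;\frac{\E[\s_1+2W_1]}{\E[\s_1]},
\]
and inverting yields the claimed formula \eqref{speedformula}. The case $\d\in(1,2]$ is included with the convention that $\E[W_1]=\infty$ (consistent with $P(W_1>x)\sim C_2 x^{-\d/2}$), which correctly produces $v_0=0$, matching the known fact that $v_0>0$ iff $\d>2$.

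The main technical obstacle is the tail term $\sum_{i=1}^\infty V_i^{(n)}$: its control requires the domination $V_i^{(n)}\le V_{n+i}$ together with the recurrence of $V_i$, and the argument must work both when $\E[W_1]$ is finite (so one can compare to a sum with finite expectation) and when $\E[W_1]=\infty$ (where one needs an extra a.s.\ bound like $\max_{k\le n}(W_k-W_{k-1})=o(n)$ coming from the polynomial tail of $W_1$). The rest of the proof is a standard renewal/LLN calculation once the regeneration structure is in hand.
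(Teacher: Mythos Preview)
Your approach is essentially the same as the paper's: both use the regeneration structure, the renewal theorem for $k(n)/n\to 1/\E[\s_1]$, the sandwich $W_{k(n)-1}\le\sum_{i=1}^n V_i\le W_{k(n)}$, and the SLLN for $W_k/k$. The paper simply cites \cite{bsCRWspeed} for the reduction $1/v_0=1+2\lim_n n^{-1}\sum_{i=1}^n V_i$ (thereby delegating the control of the tail term $\sum_i V_i^{(n)}$), whereas you supply an explicit argument for it.

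One caveat on that explicit tail argument: your fallback claim that $\max_{k\le n}(W_k-W_{k-1})=o(n)$ via Borel--Cantelli actually fails when $\d\in(1,2]$, since then $P(W_1>x)\sim C_2 x^{-\d/2}$ with $\d/2\le 1$, so $\sum_n P(W_1>\e n)=\infty$ and by the second Borel--Cantelli lemma $(W_k-W_{k-1})>\e k$ infinitely often. This gap is inessential, however: in that regime $\E[W_1]=\infty$, so the main term $n^{-1}\sum_{i=1}^n V_i\to\infty$ already, and since the tail term is nonnegative the right-hand side of \eqref{TnVirep} divided by $n$ diverges regardless, giving $1/v_0=\infty$ as required. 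Your proof stands once you drop the unneeded $o(n)$ claim for $\d\le 2$ and simply invoke nonnegativity there (as your final paragraph in effect does).
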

\begin{rem}
 The tail decay of $W_1$ in \eqref{Wstails} implies that $E[W_1]=\infty$ when $\d \in (1,2]$. However, the limiting speed $v_0 = 0$ when $\d \in (1,2]$ so that the equality \eqref{speedformula} still holds in this case. 
\end{rem}

\section{Large Deviations for the Branching Process}\label{LDPVsec}

In this section we discuss the large deviations of $n^{-1} \sum_{i=1}^n V_i$. 
Let
\be\label{LWsdef}
 \Lambda_{W,\s}(\l,\eta) = \log \E\left[ e^{\l W_1 + \eta \s_1} \ind{\s_1 < \infty} \right] , \qquad \l,\eta \in \R,
\ee
be the logarithmic moment generating function of $(W_1,\s_1)$, and let 
\be\label{LVIVdef}
 \L_V(\l) = - \sup \{ \eta : \L_{W,\s}(\l,\eta) \leq 0 \} \quad \text{and} \quad I_V(x) = \sup_\l \l x - \L_V(\l). 
\ee
The relevance of these functions is seen by the following Theorem, which is a direct application of a more general result of Nummelin and Ney (see remark (ii) at the bottom of page 594 in \cite{nnMAP2}).


\begin{thm}\label{LDPVn0} 
Let $I_V(x)$ be defined as in \eqref{LVIVdef}. Then,
\[
 \liminf_{n\ra\infty} \frac{1}{n} \log P\left( \frac{1}{n} \sum_{i=1}^n V_i \in G, \, V_n = j \right) \geq - \inf_{x \in G} I_V(x),
\]
for all open $G$ and any $j\geq 0$, and 
\[
 \limsup_{n\ra\infty} \frac{1}{n} \log P\left( \frac{1}{n} \sum_{i=1}^n V_i \in F, \, V_n = j \right) \leq - \inf_{x \in F} I_V(x),
\]
for all closed $F$ and any $j\geq 0$. 
\end{thm}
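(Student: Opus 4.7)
The plan is to recognize that $V_i$ together with the partial sums $S_n = \sum_{i=1}^n V_i$ is a Markov additive process with a regenerative atom at $0$, and then to invoke the Nummelin--Ney LDP essentially as a black box. The atomic structure comes from Section \ref{Visec}: returns of $V_i$ to $0$ are the regeneration times $\s_k$, between which the pairs $(W_k-W_{k-1},\,\s_k-\s_{k-1})$ are i.i.d.\ with joint log-moment generating function $\L_{W,\s}$ defined in \eqref{LWsdef}. In particular $S_{\s_k} = W_k$ is a random walk at the regeneration level, and the full sum $S_n$ differs from $W_{k(n)-1}$ only by the partial contribution of the cycle straddling time $n$.

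The algebraic heart of the matter is that $\L_V(\l)$ is defined precisely so that $E[e^{\l W_1 + \L_V(\l)\s_1}\ind{\s_1<\infty}] \leq 1$, with equality whenever the supremum in \eqref{LVIVdef} is attained. This identity makes $M_k = \exp(\l W_k + \L_V(\l)\s_k)$ a mean-one (super)martingale along regeneration times. The upper bound on $P(n^{-1} S_n \in F,\, V_n = j)$ then follows from Markov's inequality applied to $M_{k(n)}$ stopped at the first regeneration past $n$, after controlling the overshoot $\s_{k(n)} - n$ and the local event $\{V_n = j\}$ by truncation arguments whose cost is at most polynomial and therefore invisible at the exponential scale. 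For the lower bound, one performs the standard Cram\'er change of measure on cycles by tilting the i.i.d.\ cycle law by $e^{\l(W_k-W_{k-1})+\L_V(\l)(\s_k-\s_{k-1})}$; under the tilted measure the empirical mean $n^{-1} S_n$ concentrates at the Legendre conjugate $x$ of $\l$, and undoing the tilt recovers the rate $I_V(x)$. The endpoint constraint $V_n = j$ is absorbed by prepending a prescribed final cycle engineered to end at state $j$ near time $n$, which costs at most polynomially.

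The main obstacle will be the case $\d \leq 0$, when the chain $V_i$ is transient by Lemma \ref{Vrt} and $\s_1 = \infty$ with positive probability. Here the indicator in \eqref{LWsdef} is essential, and one must verify that $\L_V$ defined via the supremum in \eqref{LVIVdef} is still proper, convex, and lower semicontinuous, and that its Legendre transform $I_V$ acts as a genuine rate function on all of $[0,\infty)$. A secondary technical point, which is precisely what the Nummelin--Ney remark cited in the paper delivers, is showing that conditioning on the local event $\{V_n = j\}$ rather than on the mere marginal $\{n^{-1} S_n \in G\}$ costs at most a subexponential factor, so that both bounds carry over after fixing the endpoint state.
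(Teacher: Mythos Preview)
Your proposal is correct and matches the paper's approach: the paper does not prove this theorem at all but simply cites it as a direct application of Nummelin and Ney \cite{nnMAP2} (remark (ii), p.~594), exactly the black box you identify. Your additional sketch of the supermartingale/change-of-measure mechanics and the endpoint-conditioning argument goes beyond what the paper provides, but is consistent with the standard Markov-additive machinery underlying that reference.
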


In order to obtain large deviation results for the related excited random walk, it will also be necessary to obtain large deviation 
asymptotics of $n^{-1} \sum_{i=1}^n V_i$ without the added condition on the value of $V_n$. 

\begin{thm}\label{LDPVn}
 Let $I_V(x)$ be defined as in \eqref{LVIVdef}. Then, 
$n^{-1} \sum_{i=1}^n V_i$ satisfies a large deviation principle with rate function $I_V(x)$. That is, 
\[
 \liminf_{n\ra\infty} \frac{1}{n} \log P\left( \frac{1}{n} \sum_{i=1}^n V_i \in G \right) \geq - \inf_{x \in G} I_V(x),
\]
for all open $G$, and 
\be\label{LDPVnub}
 \limsup_{n\ra\infty} \frac{1}{n} \log P\left( \frac{1}{n} \sum_{i=1}^n V_i \in F \right) \leq - \inf_{x \in F} I_V(x),
\ee
for all closed $F$. 
\end{thm}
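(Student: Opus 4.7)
The plan is to bootstrap from the conditional LDP in Theorem \ref{LDPVn0} by removing the constraint on the final value $V_n$. The lower bound is immediate: for any open $G$,
\[
P\Bigl(\frac{1}{n}\sum_{i=1}^n V_i \in G\Bigr) \geq P\Bigl(\frac{1}{n}\sum_{i=1}^n V_i \in G,\ V_n = 0\Bigr),
\]
so the lower bound of Theorem \ref{LDPVn0} with $j=0$ yields $\liminf_n \frac{1}{n}\log P(n^{-1}\sum V_i \in G) \geq -\inf_G I_V$.

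For the upper bound I proceed in two stages. First, I would establish exponential tightness,
\[
\lim_{L \to \infty}\limsup_n \frac{1}{n}\log P\Bigl(\frac{1}{n}\sum_{i=1}^n V_i > L\Bigr) = -\infty,
\]
which reduces the proof to closed subsets $F \subset [0, L]$. Since $V_i \geq 0$, on the event $\{n^{-1}\sum V_i \in F\}$ we have $V_n \leq \sum_{i=1}^n V_i \leq nL$, and therefore
\[
P\Bigl(\frac{1}{n}\sum_{i=1}^n V_i \in F\Bigr) \leq \sum_{j=0}^{\lfloor nL \rfloor} P\Bigl(\frac{1}{n}\sum_{i=1}^n V_i \in F,\ V_n = j\Bigr).
\]
The sum has only $O(n)$ terms, contributing a factor of $\log n = o(n)$ in the exponent.

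The main obstacle is obtaining the upper bound in Theorem \ref{LDPVn0} uniformly in $j \in \{0, 1, \dots, \lfloor nL \rfloor\}$, since that theorem is stated only pointwise in $j$. My plan is to reduce each term in the sum to the case $V_n = 0$ via the strong Markov property at time $n$: given $V_n = j$, let $T_j$ be the first subsequent return of the Markov chain to $0$. On the event $\{T_j \leq \varepsilon n\}$, the empirical mean $(n + T_j)^{-1}\sum_{i=1}^{n + T_j} V_i$ lies within $O(\varepsilon)$ of $n^{-1}\sum_{i=1}^n V_i$ and satisfies $V_{n+T_j} = 0$, so Theorem \ref{LDPVn0} with $j = 0$ applies to the extended path of length $n + T_j$ with a slightly enlarged target set. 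Controlling $P(T_j > \varepsilon n)$ uniformly in $j$ is the technical crux; for $j = o(n)$ this follows from the tail estimate on $\sigma_1$ in \eqref{Wstails}, while for $j$ of order $n$ one must exploit that the joint event $\{V_n = j\} \cap \{n^{-1}\sum V_i \in F\}$ simultaneously places the chain in a high state and constrains the empirical average, which is rare enough at the exponential scale to be absorbed into the $o(n)$ factor.
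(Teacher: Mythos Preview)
Your lower bound is correct and matches the paper. The upper bound, however, has a genuine gap at the very first step: exponential tightness fails here. The rate function $I_V$ is non-increasing with $\lim_{x\to\infty} I_V(x)=\inf_x I_V(x)=0$ (see Lemmas~\ref{infIVlem} and~\ref{IVprops}), so for any $L$ the open set $G=(L,\infty)$ has $\inf_G I_V=0$, and the lower bound you have already proved gives
\[
\liminf_{n\to\infty}\frac{1}{n}\log P\Bigl(\tfrac{1}{n}\sum_{i=1}^n V_i>L\Bigr)\ge 0.
\]
Hence $\limsup_n \tfrac{1}{n}\log P(\cdot>L)$ cannot tend to $-\infty$ as $L\to\infty$, and your reduction to bounded $F$ via exponential tightness cannot be carried out.

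The paper sidesteps this by exploiting the monotonicity of $I_V$ directly: since $I_V$ is non-increasing, the general upper bound follows from the case $F=(-\infty,x]$, and for such $F$ the paper proves $\limsup_n \tfrac{1}{n}\log E[e^{\lambda\sum_{i=1}^n V_i}]\le \Lambda_V(\lambda)$ for every $\lambda<0$ via a last-regeneration decomposition and the key technical Lemma~\ref{suptlem}. Note that if you replace your tightness step by this monotonicity reduction, your decomposition over $V_n=j$ with $0\le j\le \lfloor nx\rfloor$ becomes available (since $V_n\le\sum_{i\le n}V_i\le nx$ on $F$). But the remaining ``technical crux'' is still not resolved: for $j$ of order $n$ the first return time to $0$ from state $j$ is itself typically of order $j$ (the chain behaves like a critical branching process), so $P(T_j>\varepsilon n)$ is not small uniformly in $j\le nx$, and your heuristic that the joint constraint with $\{n^{-1}\sum V_i\in F\}$ absorbs this cost is not justified. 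A rigorous implementation would have to quantify that tradeoff, which is essentially what the paper's Lemma~\ref{suptlem} does in the moment-generating-function formulation.
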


\begin{rem}
 There are many results in the large deviations literature that imply a large deviation principle for the empirical mean of a Markov chain. However, we were not able to find a suitable theorem that implied Theorem \ref{LDPVn}. Some of the existing results required some sort of fast mixing of the Markov chain \cite{bdLDPMixing, dzLDTA}, but the Markov chain $\{V_i\}_{i\geq 0}$ mixes very slowly since if $V_0$ is large it typically takes a long time to return to $0$ (on the order of $\bigo(V_0)$ steps). Moreover, it is very important that the rate functions are the same in Theorems \ref{LDPVn0} and \ref{LDPVn}, and many of the results for the large deviations for the empirical mean of a Markov chain formulate the rate function in terms of the spectral radius of an operator \cite{daUBLD} instead of in terms of logarithmic moment generating functions as in \eqref{LWsdef} and \eqref{LVIVdef}. 
\end{rem}
\begin{proof}
Obviously the lower bound in Theorem \ref{LDPVn} follows from the corresponding lower bound in \eqref{LDPVn0}, and so it is enough to prove the upper bound only. 
Our proof will use the following facts about the functions $\L_V$ and $I_V$. 
\begin{enumerate}
 \item $\L_V(\l)$ is convex and continuous on $(-\infty,0]$ and $\L_V(\l) = \infty$ for all $\l>0$. Therefore, $I_V(x) = \sup_{\l<0} \left(\l x - \L_V(\l)\right)$. \label{IVprop1}
 \item $I_V(x)$ is a convex, non-increasing function of $x$, and $\lim_{x\ra\infty} I_V(x) = \inf_x I_V(x) = 0$. \label{IVprop2}
\end{enumerate}
These properties and more will be shown in Section \ref{IVprop} below where we give a qualitative description of the rate function $I_V$.  
By property \ref{IVprop2} above, 
it will be enough to prove the large deviation upper bound for closed sets of the form $F=(-\infty,x]$. That is, we 
 need only to show that
\be\label{ltub}
 \limsup_{n\ra\infty} \frac{1}{n} \log P\left( \sum_{i=1}^n V_i \leq x n \right) \leq -I_V(x), \quad \forall x <\infty. 
\ee
This will follow from 
\be\label{lglm}
 \limsup_{n\ra\infty} \frac{1}{n} \log E[ e^{\l \sum_{i=1}^n V_i} ] \leq \L_V(\l), \quad \forall \l < 0. 
\ee
Indeed, combining \eqref{lglm} with the usual Chebyshev upper bound for large deviations gives that for any $x<\infty$ and $\l<0$,
\[
 \limsup_{n\ra\infty} \frac{1}{n} \log P\left( \sum_{i=1}^n V_i \leq x n \right) 
\leq  \limsup_{n\ra\infty} \frac{1}{n} \log ( e^{-\l x n} E[ e^{\l \sum_{i=1}^n V_i} ] ) \leq - \l x + \L_V(\l).
\]
Optimizing over $\l<0$ and using property \ref{IVprop1} above proves \eqref{ltub}.

It remains still to prove \eqref{lglm}. 
By decomposing according to the time of the last regeneration before $n$ we obtain
\begin{align}
& E[ e^{ \l \sum_{i=1}^n V_i - \L_V(\l) n} ] \nonumber \\
&= E[ e^{\l \sum_{i=1}^n V_i- \L_V(\l) n} \ind{ \s_1 > n } ] + \sum_{m=1}^n \sum_{t=0}^{n-1} E[ e^{ \l \sum_{i=1}^n V_i - \L_V(\l)n } \ind{ \s_m \leq n < \s_{m+1}, \, n-\s_m = t} ] \nonumber  \\
&= E[ e^{\l \sum_{i=1}^n V_i- \L_V(\l) n} \ind{ \s_1 > n } ] \nonumber \\
&\qquad + \sum_{m=1}^n \sum_{t=0}^{n-1} E[ e^{ \l W_m - \L_V(\l) \s_m} \ind{ \s_m = n-t} ] E[ e^{\l \sum_{i=1}^t V_i- \L_V(\l) t} \ind{ \s_1 > t } ] \nonumber \\
&\leq E[ e^{\l \sum_{i=1}^n V_i- \L_V(\l) n} \ind{ \s_1 > n } ] \label{Ek} \\
&\qquad + \left( \sum_{m=1}^n E[ e^{ \l W_m - \L_V(\l) \s_m} \ind{\s_m < \infty} ] \right) \left( \sum_{t=0}^{n-1}  E[ e^{\l \sum_{i=1}^t V_i- \L_V(\l) t} \ind{ \s_1 > t } ] \right) \label{twosums},
\end{align}
where we used the Markov property in the second equality. 
The definition of $\L_V$ and the monotone convergence theorem imply that $\L_{W,\s}(\l,-\L_V(\l)) \leq 0$. Therefore, 
\begin{align*}
 \sum_{m=1}^n E[ e^{ \l W_m - \L_V(\l) \s_m} \ind{\s_m < \infty} ] &= \sum_{m=1}^n e^{m \L_{W,\s}(\l,-\L_V(\l))} \leq n. 
\end{align*}

To bound the second sum in \eqref{twosums} we need the following lemma, whose proof we postpone for now. 
\begin{lem}\label{suptlem}
 For any $\l < 0$, 
\[
 \sup_{t\geq 0}  E[ e^{\l \sum_{i=1}^t V_i- \L_V(\l) t} \ind{ \s_1 > t } ] < \infty. 
\]
\end{lem}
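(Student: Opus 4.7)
The plan is to apply the strong Markov property of the branching process $V$ at time $t$ and thereby reduce the estimate to the inequality $e^{\Lambda_{W,\sigma}(\lambda,-\Lambda_V(\lambda))}\leq 1$, which follows from the definition of $\Lambda_V(\lambda)$ together with monotone convergence (to handle the possibly non-attained supremum). Let $\tau_0 := \inf\{i\geq 1 : V_i = 0\}$ and, for $v\geq 0$, define the excursion-completion functional
\[
\psi(v) := E_v\!\left[e^{\lambda \sum_{i=1}^{\tau_0} V_i - \Lambda_V(\lambda)\tau_0}\mathbf{1}\{\tau_0 < \infty\}\right].
\]
On the event $\{\sigma_1 > t\}$ the post-$t$ chain $(V_{t+j})_{j\geq 0}$ has the law of $V$ started from $V_t \geq 1$; in particular $\sigma_1 = t+\tau_0$ and $W_1 = \sum_{i=1}^t V_i + \sum_{j=1}^{\tau_0} V_{t+j}$. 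Conditioning on $\mathcal{F}_t$ and applying the strong Markov property yields the factorization
\[
E\!\left[e^{\lambda W_1 - \Lambda_V(\lambda)\sigma_1}\mathbf{1}\{t<\sigma_1<\infty\}\right] = E\!\left[e^{\lambda \sum_{i=1}^t V_i - \Lambda_V(\lambda) t}\mathbf{1}\{\sigma_1 > t\}\,\psi(V_t)\right].
\]
Since the left-hand side is bounded above by $E[e^{\lambda W_1 - \Lambda_V(\lambda)\sigma_1}\mathbf{1}\{\sigma_1 < \infty\}] = e^{\Lambda_{W,\sigma}(\lambda,-\Lambda_V(\lambda))}\leq 1$, a uniform lower bound $\inf_{v\geq 1}\psi(v) \geq c > 0$ would immediately yield $f(t) \leq 1/c$ for every $t\geq 0$ and finish the proof.

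The hard part will be establishing this uniform lower bound on $\psi$. A naive single-step forcing $V_1 = 0$ combined with Assumption \ref{easm} yields only $\psi(v)\geq C\theta^v$ for some $\theta\in(0,1)$, which is not uniform in $v$. To get around this I would decompose $f(t) = \sum_v \tilde f(t,v)$ with $\tilde f(t,v) := E[e^{\lambda \sum_{i=1}^t V_i - \Lambda_V(\lambda) t}\mathbf{1}\{\sigma_1 > t,\, V_t = v\}]$. For $v$ in a bounded range $\{1,\ldots,K\}$ the infimum $\inf_{1\leq v\leq K}\psi(v)$ is strictly positive, so the Markov factorization above controls the contribution $\sum_{v\leq K}\tilde f(t,v)$. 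For $v > K$ the pointwise inequality $\sum_{i=1}^t V_i \geq V_t = v$ together with $\lambda<0$ gives the compensating bound $\tilde f(t,v) \leq e^{\lambda v}\,e^{-\Lambda_V(\lambda)t}\,P(\sigma_1>t, V_t=v)$, which I would balance against the lower bound $\psi(v)\geq C\theta^v$ via the inequality $\sum_v \tilde f(t,v)\,\psi(v)\leq 1$ in order to sum the tail $\sum_{v>K}\tilde f(t,v)$. The main technical work is this delicate matching between the decay rate $|\lambda|$ of $e^{\lambda v}$ and the decay rate of the lower bound on $\psi(v)$; in particular, one may need to iterate the single-step forcing over $O(v)$ steps to improve the exponential rate $\theta$ when $|\lambda|$ is small, which is where I expect the main obstacle to lie.
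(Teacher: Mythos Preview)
Your setup through the bound on $\{V_t\leq K\}$ matches the paper exactly: the Markov factorization at time $t$, the functional $\psi(v)=E^v[e^{\l W_1-\L_V(\l)\s_1}\ind{\s_1<\infty}]$, and the observation that $\inf_{1\leq v\leq K}\psi(v)>0$ yields a uniform-in-$t$ bound $C_{K,\l}$ on the contribution from $\{\s_1>t,\,V_t\leq K\}$. The gap is in how you handle the tail $\{V_t>K\}$.

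The two ingredients you propose to balance do not close. From $\sum_v\tilde f(t,v)\psi(v)\leq 1$ and $\psi(v)\geq C\theta^v$ you obtain only $\sum_v\tilde f(t,v)\,\theta^v\leq C^{-1}$, which gives no control on $\sum_{v>K}\tilde f(t,v)$ since $\theta<1$. Your pointwise bound $\tilde f(t,v)\leq e^{\l v}e^{-\L_V(\l)t}P(\s_1>t,\,V_t=v)$ is summable in $v$ but carries the factor $e^{-\L_V(\l)t}$, which grows exponentially because $\L_V(\l)<0$ for every $\l<0$ (Lemma~\ref{infdomain}), while $P(\s_1>t)$ decays at best polynomially. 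No splitting of the range of $v$ reconciles these two bounds. The suggested remedy of improving $\theta$ by a multi-step forcing does not help either: a path from $v$ to $0$ of bounded length has probability $e^{-cv}$ (some single step must absorb a drop of order $v$), while a path of length of order $v$ accumulates $\sum_{i=1}^{\tau_0}V_i$ of order $v^2$ and hence contributes a factor $e^{c\l v^2}$ to $\psi(v)$; in neither regime does $\psi(v)$ decay slower than exponentially, so you cannot beat $e^{\l v}$ when $|\l|$ is small.

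The paper sidesteps this entirely by iterating in $t$ rather than summing in $v$. On $\{\s_1>t,\,V_t>K\}$ one has $V_t\geq 1$, so $\{\s_1>t,\,V_t>K\}=\{\s_1>t-1,\,V_t>K\}$, and also $e^{\l V_t}\leq e^{\l K}$. Peeling off the $i=t$ term therefore gives
\[
E\bigl[e^{\l\sum_{i=1}^t V_i-\L_V(\l)t}\ind{\s_1>t,\,V_t>K}\bigr]\leq e^{\l K-\L_V(\l)}\,f(t-1),
\]
so that $f(t)\leq C_{K,\l}+e^{\l K-\L_V(\l)}f(t-1)$. Choosing $K>\L_V(\l)/\l$ makes the contraction factor $e^{\l K-\L_V(\l)}<1$, and iteration yields $\sup_{t\geq 0} f(t)\leq C_{K,\l}/(1-e^{\l K-\L_V(\l)})+1$. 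The one-step peel extracts $e^{\l K}$ without ever invoking $\psi(V_t)$, so the lack of a uniform lower bound on $\psi$ is irrelevant.
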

Lemma \ref{suptlem} implies the expectation \eqref{Ek} is uniformly bounded in $n$ and that the second sum in \eqref{twosums} grows at most linearly in $n$.
Since the first sum in \eqref{twosums} also grows linearly in $n$ this implies that
\[
 \limsup_{n\ra\infty} \frac{1}{n} \log E[ e^{ \l \sum_{i=1}^n V_i - \L_V(\l) n} ] \leq 0, \quad \forall \l < 0, 
\]
which is obviously equivalent to \eqref{lglm}. It remains only to give the proof of Lemma \ref{suptlem}.

\begin{proof}[Proof of Lemma \ref{suptlem}]
First, note that
\begin{align}
1 \geq  E[e^{\l W_1 - \L_V(\l) \s_1} \ind{\s_1 < \infty} ] &\geq E[e^{\l W_1 - \L_V(\l) \s_1} \ind{t < \s_1 < \infty} ] \nonumber \\
&= E \left[ e^{\l \sum_{i=1}^t V_i - \L_V(\l) t} \ind{\s_1 > t} e^{\l \sum_{i=t+1}^{\s_1} V_i - \L_V(\l) (\s_1 -t) } \ind{\s_1 < \infty} \right] \nonumber \\
&= E \left[ e^{\l \sum_{i=1}^t V_i - \L_V(\l) t} \ind{\s_1 > t} E^{V_t}\left[ e^{\l W_1 - \L_V(\l) \s_1} \ind{\s_1 < \infty} \right] \right], \label{Vtcond}
\end{align}
where in the last equality we use the notation $E^m$ for the expectation with respect to the law of the Markov chain $V_i$ conditioned on $V_0 = m$. 
Since $V_i$ is an irreducible Markov chain and $E[e^{\l W_1 - \L_V(\l) \s_1} \ind{\s_1 < \infty} ] \leq 1$, then the inner expectation in \eqref{Vtcond} is finite for any value of $V_t$ and can be uniformly bounded below if $V_t$ is restricted to a finite set. 
Thus, for any $K<\infty$,
\begin{align}
 & E \left[ e^{\l \sum_{i=1}^t V_i - \L_V(\l) t} \ind{\s_1 > t, \, V_t \leq K} \right] \nonumber \\
&\quad \leq \left( \inf_{m \in [1,K]}  E^m[ e^{\l W_1 - \L_V(\l) \s_1} \ind{\s_1 < \infty} ]  \right)^{-1}  E[e^{\l W_1 - \L_V(\l) \s_1} \ind{\s_1 < \infty}].
\label{VtKbound}
\end{align}
Let $C_{K,\l} < \infty$ be defined to be the right side of the inequality above. 

Note that the upper bound \eqref{VtKbound} does not depend on $t$. 
The key to finishing the proof of Lemma \ref{suptlem} is using the upper bound \eqref{VtKbound} in an iterative way. 
For any $t\geq 1$, 
\begin{align*}
 E[ e^{\l \sum_{i=1}^t V_i- \L_V(\l) t} \ind{ \s_1 > t } ] &\leq C_{K,\l} + E[ e^{\l \sum_{i=1}^t V_i- \L_V(\l) t} \ind{ \s_1 > t, \, V_t > K } ] \\
& \leq C_{K,\l} + e^{\l K - \L_V(\l)} E [ e^{\l \sum_{i=1}^{t-1} V_i- \L_V(\l) (t-1)} \ind{ \s_1 > t-1 } ],
\end{align*}
where in the last inequality we used that $\{ \s_1 > t, \, V_t > K \} = \{ \s_1 > t-1, \, V_t > K \}$.
Iterating the above bound implies that 
\[
 E[ e^{\l \sum_{i=1}^t V_i- \L_V(\l) t} \ind{ \s_1 > t } ] \leq C_{K,\l} \sum_{l=0}^{t-1} e^{l(\l K - \L_V(\l))} + e^{t(\l K - \L_V(\l))}.
\]
By choosing $K > \L_V(\l)/\l$ so that $e^{\l K - \L_V(\l)} < 1$, we thus obtain that 
\[
 \sup_{t\geq 0} E [ e^{\l \sum_{i=1}^t V_i- \L_V(\l) t} \ind{ \s_1 > t } ] \leq \frac{C_{K,\l}}{1-e^{K \l - \L_V(\l)}} + 1 < \infty. 
\]
\end{proof}
\end{proof}


\subsection{Properties of the rate function $I_V$}\label{IVprop}

We now turn our attention to a qualitative description of the rate function $I_V$. Since $I_V$ is defined as the Legendre dual of $\L_V$, these properties will in turn follow from an understanding of $\L_V$ (and also $\L_{W,\s}$). 
We begin with some very basic properties of $\L_V$ and the corresponding properties of $I_V$. 

\begin{lem}\label{infdomain}
 $\L_V(\l)$ is non-decreasing, convex, and left-continuous as a function of $\l$. 
Moreover,
\begin{enumerate}
 \item $\L_V(\l) \in (\log \Ev[\w_0(1)], 0)$ for all $\l < 0$, and $\lim_{\l \ra-\infty} \L_V(\l) = \log \Ev[\w_0(1)]$. \label{LVlbounds}
 \item $\L_V(\l) = \infty$ if $\l > 0$.
\end{enumerate}
\end{lem}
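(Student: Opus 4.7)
The plan is to reduce everything to properties of the jointly convex log moment generating function $\L_{W,\s}(\l,\eta)$. Write $\eta^*(\l) = \sup\{\eta : \L_{W,\s}(\l,\eta) \leq 0\}$, so $\L_V(\l) = -\eta^*(\l)$.

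Because $W_1, \s_1 \geq 0$, $\L_{W,\s}$ is jointly convex on $\R^2$ and non-decreasing in each coordinate. Its zero sublevel set is therefore convex, and its upper boundary $\eta^*(\l)$ is a concave, non-increasing function of $\l$. This yields that $\L_V = -\eta^*$ is convex and non-decreasing. For left-continuity at $\l_0$, monotone convergence of $e^{\l W_1}$ as $\l \uparrow \l_0$ gives $\L_{W,\s}(\l,\eta) \uparrow \L_{W,\s}(\l_0,\eta)$ for each $\eta$, which forces $\eta^*(\l) \downarrow \eta^*(\l_0)$.

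For the explicit bounds in (i), the key deterministic observation is $W_1 \geq \s_1 - 1$ on $\{\s_1 < \infty\}$, since $V_i \geq 1$ for $1 \leq i < \s_1$. For $\l < 0$ this gives $e^{\l W_1} \leq e^{\l(\s_1 - 1)}$, so $\L_{W,\s}(\l,\eta) < \infty$ whenever $\l + \eta < 0$, and at $\eta = 0$ one finds $\L_{W,\s}(\l,0) = \log E[e^{\l W_1} \ind{\s_1 < \infty}] < 0$ (strict because $P(\s_1 = 2) > 0$ by Assumption \ref{easm}). Continuity of $\L_{W,\s}(\l,\cdot)$ near $\eta = 0$ then gives $\eta^*(\l) > 0$, i.e.\ $\L_V(\l) < 0$. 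For the opposite bound, take $\eta_0 = -\log \Ev[\w_0(1)] > 0$; the $\{\s_1 = 1\}$ contribution to the MGF is exactly $e^{\eta_0} \Ev[\w_0(1)] = 1$, and the $\{\s_1 \geq 2\}$ contribution is strictly positive, so $\L_{W,\s}(\l,\eta_0) > 0$ and $\L_V(\l) > \log \Ev[\w_0(1)]$. As $\l \to -\infty$, using $W_1 \geq 1$ on $\{\s_1 \geq 2\}$, dominated convergence makes the $\{\s_1 \geq 2\}$ contribution vanish, so $\L_{W,\s}(\l,\eta) \to \log(e^\eta \Ev[\w_0(1)])$ and $\eta^*(\l) \to \eta_0$.

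The main obstacle is (ii), $\L_V(\l) = \infty$ for $\l > 0$. By monotone convergence in $\eta$, this reduces to showing $E[e^{\l W_1 + \eta \s_1} \ind{\s_1 < \infty}] = \infty$ for every $\eta \in \R$. The naive $\{\s_1 = 2\}$ contribution is a geometric sum $\sum_k (e^\l/4)^k$, which is infinite only for $\l > \log 4$, so the plan is to exploit longer flat excursions. For each integer $n \geq 1$, let
\[
 A_n^k = \{V_1 = V_2 = \cdots = V_n = k,\, V_{n+1} = 0\},
\]
so that $\s_1 = n + 1$ and $W_1 = nk$ on $A_n^k$. Assumption \ref{easm}, together with the fact that cookies past position $M$ are fair $1/2$, gives $P(V_1 = k) \asymp 2^{-k}$ and $P(V_{n+1} = 0 \mid V_n = k) \asymp 2^{-k}$, while each flat transition satisfies $P(V_{i+1} = k \mid V_i = k) \asymp 1/\sqrt{k}$ via the central binomial coefficient in the fair-coin regime. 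The Markov property (valid under iid environment) then gives $P(A_n^k) \gtrsim C_n\, 4^{-k}\, k^{-(n-1)/2}$, hence
\[
 E[e^{\l W_1 + \eta \s_1} \ind{\s_1 < \infty}] \geq C_n\, e^{(n+1)\eta} \sum_{k \geq M} k^{-(n-1)/2}\, e^{k(n\l - \log 4)}.
\]
For any $\l > 0$, choosing $n > (\log 4)/\l$ makes $n\l > \log 4$, so the right side diverges regardless of $\eta$, completing the proof.
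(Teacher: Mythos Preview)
Your argument is correct. For the basic properties (monotonicity, convexity, left-continuity) and for part~(i) you are doing essentially what the paper does: exploiting convexity of $\L_{W,\s}$, the inequality $W_1\ge\s_1-1$, and the explicit $\{\s_1=1\}$ contribution $e^{\eta}\Ev[\w_0(1)]$, together with monotone/dominated convergence. The only cosmetic difference is that the paper frames things directly in terms of $\L_{W,\s}(\l,\eta)$ at specific values of $\eta$, while you phrase it via the sublevel set and $\eta^*(\l)$.

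The genuine difference is in part~(ii). The paper appeals to the random-walk slowdown heuristic (Example~\ref{slowdownex}): force the walk to consume all cookies in $[0,n^{1/3})$, then stay in that interval for $n$ steps, and finally exit to the right. Translating via the $U^n_i\leftrightarrow V_i$ correspondence gives $P(W_1>n/2,\ \s_1=\lceil n^{1/3}\rceil)\ge Ce^{-cn^{1/3}}$, which makes $\L_{W,\s}(\l,\eta)=\infty$ for any $\l>0$. Your route is purely branching-process: you build flat excursions $A_n^k=\{V_1=\cdots=V_n=k,\ V_{n+1}=0\}$, estimate each transition explicitly (the fair-coin tails past the $M$-th cookie give $P(V_1=k),\ P(V_{n+1}=0\mid V_n=k)\asymp 2^{-k}$ and the central-binomial estimate gives $P(V_{i+1}=k\mid V_i=k)\asymp k^{-1/2}$), and then pick $n>\log 4/\l$ so that the contribution $\sum_k e^{nk\l}4^{-k}k^{-(n-1)/2}$ diverges. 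This is entirely self-contained within the $V_i$ framework and avoids going back to the excited walk; the paper's argument, on the other hand, recycles Example~\ref{slowdownex}, which it also needs later to show $I_X(0)=0$, so it gets double use from that construction. Both approaches are short; yours is arguably more elementary, the paper's more thematic.
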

\begin{proof}
Recall the definitions of $\L_{W,\s}$ and $\L_V$ in \eqref{LWsdef} and \eqref{LVIVdef}, respectively. 
The fact that $\L_V(\l)$ is non-decreasing follows from the fact that $\L_{W,\s}(\l_1,\eta) \leq \L_{W,\s}(\l_2,\eta)$ for any $\l_1 < \l_2$. Since $\L_{W,\s}$ is the logarithmic generating function of the joint random variables $(W_1,\s_1)$, then $\L_{W,\s}(\l,\eta)$ is a convex function of $(\l,\eta)$ (and strictly convex on $\{(\l,\eta): \, \L_{W,\s}(\l,\eta) < \infty \}$). 
The convexity of $\L_V$ as a function of $\l$ then follows easily from the convexity of $\L_{W,\s}(\l,\eta)$ and the definition of $\L_V$. 
Also, left-continuity follows from the definition of $\L_V$ and the fact that $\lim_{\l\ra \l'} \L_{W,\s}(\l,\eta) = \L_{W,\s}(\l',\eta)$ by the monotone convergence theorem.

Since $W_1 \geq 0$,  $\L_{W,\s}(\l,0) = \log E[ e^{\l W_1} \ind{\s_1 < \infty}] < 0$ for all $\l < 0$. On the other hand, since $W_1 \geq \s_1 - 1$ it follows that $\L_{W,\s}(\l, -\l) \leq -\l < \infty$ for all $\l \leq 0$. Then the continuity of $\L_{W,\s}$ and the definition of $\L_V(\l)$ imply that $\L_V(\l) < 0$ for all $\l<0$. 
Additionally, 
\[
 E[ e^{\l W_1 + \eta \s_1} \ind{\s_1 < \infty} ] > e^\eta P(\s_1 = 1) = e^{\eta} \Ev[ \w_0(1) ], 
\]
which implies that $\L_{W,\s}(\l, - \log \Ev[ \w_0(1)] ) > 0$ for all $\l<0$. 
Thus, $\L_V(\l) > \log \Ev[ \w_0(1)]$ for all $\l < 0$. 
To prove the second part of property \ref{LVlbounds}, note that $\s_1 \leq W_1 + 1$ implies that for $\eta \geq 0$,
\be\label{LWslneg}
 \lim_{\l \ra -\infty} E[ e^{\l W_1 + \eta \s_1} \ind{\s_1 < \infty} ] \leq \lim_{\l \ra -\infty} E[e^{(\l + \eta)W_1 + \eta} ] = e^\eta P(W_1 = 0) = e^\eta \Ev[ \w_0(1)], 
\ee
where the second to last equality follows from the bounded convergence theorem. From \eqref{LWslneg} and the definition of $\L_V$, it follows that $\lim_{\l \ra -\infty} \L_V(\l) \leq \log \Ev[\w_0(1)]$. Combining this with the first part of property \ref{LVlbounds} implies the second part of property \ref{LVlbounds}.

To show that $\L_V(\l) = \infty$ for $\l>0$ it is actually easiest to refer back to the excited random walk. 
Recall the naive strategy for slowdowns of the excited random walk in Example \ref{slowdownex}. 
We can modify the strategy slightly by not only consuming all cookies in $[0,n^{1/3})$ and then staying in the interval for $n$ steps, but also requiring that the random walk then exits the interval on the right. This event still has a probability bounded below by $C e^{-c n^{1/3}}$. 
Examining the branching process corresponding to the excited random walk we see that the event for the random walk described above implies that 
$U_i^{N} \geq 1$ for all $i\in [1,N-1]$, $U_0^{N} = 0$ and $\sum_{i=1}^{N} U_i^{N} > n/2$, where $N= \lceil n^{1/3} \rceil$. 
Then, using \eqref{UiVirep} we obtain that that $ P( W_1 > n/2, \, \s_1 =  \lceil n^{1/3} \rceil ) \geq C e^{-c n^{1/3} }$ for all $n\geq 1$ which implies that
\[
 E[e^{\l W_1 + \eta \s_1} \ind{\s_1 < \infty} ] \geq e^{\l n/2 + \eta n^{1/3}} P(W_1 > n/2, \, \s_1 =  \lceil n^{1/3} \rceil ) \geq C e^{ \l n/2 + \eta n^{1/3} -c n^{1/3} }, 
\]
for any $\l > 0$ and $\eta < 0$. Since this lower bound can be made arbitrarily large by taking $n\ra\infty$, this shows that $\L_{W,\s}(\l,\eta) = \infty$ for any $\l>0$ and $\eta< 0$, and thus $\L_V(\l) = \infty$ for all $\l>0$.  
\end{proof}

We would like to say that $\L_{W,\s}(\l,-\L_V(\l)) = 0$. However, in order to be able to conclude this is true, we need to show that $\L_{W,\s}(\l, \eta) \in [ 0 , \infty)$ for some $\eta$. 
The next series of lemmas gives some conditions where we can conclude this is true. 

\begin{lem}\label{lveryneg}
If $\l \leq \log \Ev[\w_0(1)]$, then 
\be\label{impsoln}
 \L_{W,\s}(\l,-\L_V(\l)) = 0.
\ee
Moreover, $\L_V(\l)$ is strictly convex and analytic on $(-\infty, \log \Ev[\w_0(1)])$. 
\end{lem}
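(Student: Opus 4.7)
The plan is to show first that, under the hypothesis $\l \leq \log \Ev[\w_0(1)]$, the point $\eta = -\L_V(\l)$ lies strictly in the interior of the set on which $\L_{W,\s}(\l,\cdot)$ is finite. Once this is established, \eqref{impsoln} falls out of the convexity–continuity of $\L_{W,\s}(\l,\cdot)$ combined with the boundary characterization of $-\L_V(\l)$ as the supremum of a sublevel set. Analyticity and strict convexity will then come from the analytic implicit function theorem applied to $\L_{W,\s}(\l,\eta)=0$.

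The key estimate is as follows. Set $\eta_0 = -\log \Ev[\w_0(1)]$, and note that $V_i \geq 1$ for $1 \leq i < \s_1$, so $W_1 \geq \s_1 - 1$. For $\l \leq 0$,
\begin{align*}
E\bigl[e^{\l W_1 + \eta_0 \s_1}\ind{\s_1 < \infty}\bigr]
&= e^{\eta_0}\Ev[\w_0(1)] + E\bigl[e^{\l W_1 + \eta_0 \s_1}\ind{2 \leq \s_1 < \infty}\bigr] \\
&\leq 1 + e^{-\l}\, E\bigl[e^{(\l + \eta_0)\s_1}\ind{2 \leq \s_1 < \infty}\bigr] \leq 1 + e^{-\l}\, P(2 \leq \s_1 < \infty),
\end{align*}
where the last inequality uses the hypothesis in the form $\l + \eta_0 \leq 0$. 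Thus $\L_{W,\s}(\l,\eta_0) < \infty$, and since Lemma \ref{infdomain}(i) gives $-\L_V(\l) < \eta_0$, the point $-\L_V(\l)$ lies in the interior of the effective domain of $\L_{W,\s}(\l,\cdot)$. Convexity of this section then yields continuity at $-\L_V(\l)$, and combining $\L_{W,\s}(\l,-\L_V(\l)) \leq 0$ (monotone convergence) with $\L_{W,\s}(\l,\eta) > 0$ for every $\eta > -\L_V(\l)$ (by the defining supremum) forces \eqref{impsoln}.

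For the moreover statement, when $\l < \log \Ev[\w_0(1)]$ strictly one has $\l + (-\L_V(\l)) < 0$ strictly, so the above estimate actually gives $\L_{W,\s}$ finite on an open two-dimensional neighborhood of $(\l,-\L_V(\l))$, where it is real-analytic as the logarithm of an absolutely convergent power series in $(e^\l, e^\eta)$ with positive coefficients. Since $\s_1 \geq 1$ a.s., $\partial_\eta \L_{W,\s} > 0$ throughout this region, so the analytic implicit function theorem applied to $F(\l,\eta) = \L_{W,\s}(\l,\eta)$ delivers analyticity of $\L_V$ on $(-\infty, \log \Ev[\w_0(1)])$. Strict convexity then follows from analyticity plus convexity: were $\L_V$ affine on some subinterval, analytic continuation would make it affine on the full interval, and the limit $\L_V(\l) \to \log \Ev[\w_0(1)]$ as $\l \to -\infty$ would force $\L_V \equiv \log \Ev[\w_0(1)]$. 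But the estimate above, together with $P(2 \leq \s_1 < \infty) > 0$ (which holds under Assumption \ref{easm}), gives the strict inequality $E[e^{\l W_1 + \eta_0 \s_1}\ind{\s_1<\infty}] > 1$, contradicting \eqref{impsoln} at such $\l$.

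The main obstacle is the preliminary placement of $-\L_V(\l)$ inside the effective domain of $\L_{W,\s}(\l,\cdot)$: the cutoff $\l = \log \Ev[\w_0(1)]$ is precisely what makes $\l + \eta_0 = 0$, which in turn is what lets the crude bound $e^{(\l+\eta_0)\s_1} \leq 1$ neutralize the otherwise possibly heavy tail of $\s_1$.
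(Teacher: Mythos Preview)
Your proof is correct and follows essentially the same approach as the paper: both use the crude bound $W_1 \geq \s_1 - 1$ to show $\L_{W,\s}$ is finite on the half-plane $\{\l + \eta \leq 0\}$ (in your case via $\eta = \eta_0$, in the paper via $\eta = -\l$), then invoke continuity to pin down the zero at $\eta = -\L_V(\l)$, and finally apply the analytic implicit function theorem on the interior of $\mathcal{D}_{W,\s}$. The one genuine difference is the strict convexity argument: the paper deduces it directly from strict convexity of $\L_{W,\s}$ on its domain (if $\L_{W,\s}(\l_i,-\L_V(\l_i))=0$ for $i=1,2$, then strict convexity forces $\L_{W,\s}$ to be negative at the midpoint, so the zero in $\eta$ there lies strictly above the average), whereas you argue by analytic continuation and the limiting value $\log\Ev[\w_0(1)]$ at $-\infty$. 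Your route is correct but longer; the paper's is the standard one-line reduction and worth knowing.
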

\begin{proof}
 Since $W_1 \geq \s_1 - 1$ we have that for $\l\leq 0$,
\[
 \L_{W,\s}(\l,-\l) = \log E [e^{\l(W_1 - \s_1)} \ind{\s_1 < \infty}] \leq -\l. 
\]
 Therefore, $\L_{W,\s}(\l,\eta) < \infty$ for all $\l<0$ and $\eta \leq -\l$. On the other hand, it was shown above that $\L_{W,\s}(\l,0) < 0$ and $\L_{W,\s}(\l, -\log \Ev[ \w_0(1)]) > 0$ when $\l < 0$. Since $\L_{W,\s}(\l,\eta)$ is monotone increasing and continuous in $\eta$ this implies that $\L_{W,\s}(\l,\eta) = 0$ has a unique solution $\eta \in [0,-\log \Ev[\w_0(1)]]$ when $\l \leq \log \Ev[\w_0(1)]$. By the definition of $\L_V$ and the fact that $\L_{W,\s}(\l,\eta)$ is strictly increasing in $\eta$, this must be $\eta = -\L_V(\l)$.  

Let $\mathcal{D}_{W,\s} = \{(\l,\eta): \, \L_{W,\s}(\l,\eta) < \infty\}$. 
The above argument shows not only that \eqref{impsoln} holds but also that $(\l,-\L_V(\l))$ is in the interior of $\mathcal{D}_{W,\s}$ when $\l \leq \log \Ev[\w_0(1)]$.
Since $\L_{W,\s}$ is analytic on $\mathcal{D}_{W,\s}$, the implicit function theorem implies that $\L_V(\l)$ is analytic on $(-\infty,\log \Ev[\w_0(1)])$. Finally, combining \eqref{impsoln} with the fact that $\L_{W,\s}$ is strictly convex on $ \mathcal{D}_{W,\s}$ implies that $\L_V(\l)$ is strictly convex on $(-\infty,\log \Ev[\w_0(1)])$.
\end{proof}

\begin{lem}\label{slope}
 For every $m < \infty$, there exists a $\l_0=\l_0(m) < 0$ such that $\L_{W,\s}(\l,-\l m) < \infty$ for all $\l \in (\l_0, 0)$. 
\end{lem}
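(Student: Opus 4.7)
The plan is to separate into the easy case $m \leq 1$ and the substantive case $m > 1$. For $m \leq 1$, since $\s_1 \geq 0$ makes $\L_{W,\s}(\l,\eta)$ monotone nondecreasing in $\eta$, and $-\l m \leq -\l$ for $\l \leq 0$ and $m \leq 1$, one has $\L_{W,\s}(\l,-\l m) \leq \L_{W,\s}(\l,-\l) \leq -\l < \infty$ by the bound $\L_{W,\s}(\l,-\l) \leq -\l$ already established at the start of the proof of Lemma \ref{lveryneg}; any $\l_0 < 0$ works.

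For $m > 1$, I set $\eta = -\l m > 0$, fix an integer $A > m$, and split the expectation $E[e^{\l W_1 + \eta \s_1}\mathbf{1}_{\s_1<\infty}] = \sum_{t \geq 1} e^{\eta t}\, E[e^{\l W_1} \mathbf{1}_{\s_1 = t}]$ by decomposing each event $\{\s_1 = t\}$ according to whether $W_1 > At$ or $W_1 \leq At$. Since $\l < 0$, the bound $e^{\l W_1} \leq e^{\l A t}$ holds on $\{W_1 > At\}$, so this contribution is at most $\sum_t e^{(\l A + \eta) t} P(\s_1 = t)$, which is finite because $\l A + \eta = |\l|(m - A) < 0$. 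On $\{W_1 \leq At\}$ one has only $e^{\l W_1} \leq 1$, so the contribution is bounded by $\sum_t e^{\eta t}\, P(\s_1 > t-1,\, W_1 \leq At)$, and the task reduces to showing that $P(\s_1 > t, W_1 \leq At)$ decays geometrically in $t$.

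The main obstacle is this geometric tail bound. The intuition is that if $V$ avoids zero for $t$ steps while accumulating only $O(t)$ total mass, then a positive fraction of the $V_i$ must lie in a fixed bounded set, and from each bounded state the chain has a uniform positive probability of being absorbed to zero on the next step. To formalize this: on $\{\s_1 > t,\, W_1 \leq At\}$ one has $V_i \geq 1$ for all $1 \leq i \leq t$ and $\sum_{i=1}^t V_i \leq At$, so a pigeonhole argument gives at least $\lceil t/2 \rceil$ indices $i \leq t$ with $V_i \in \{1,\dots,2A\}$. By Assumption \ref{easm} and the fact that $\w_0(j) = 1/2$ for $j > M$, the quantity $p := \min_{1 \leq k \leq 2A} \Ev\bigl[\prod_{j=1}^{k+1} \w_0(j)\bigr]$ is strictly positive, and the averaged transition satisfies $P(V_{i+1} = 0 \mid V_i = k) \geq p$ for every $k \in \{1,\dots,2A\}$. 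Letting $T_1 < T_2 < \cdots$ denote the successive hitting times of $\{1,\dots,2A\}$, an iterative application of the strong Markov property yields $P(V_{T_j + 1} \neq 0,\, \forall j \leq n, \, T_n < \infty) \leq (1-p)^n$; restricting to $T_n \leq t-1$ and taking $n = \lceil (t-1)/2 \rceil$ produces the desired bound $P(\s_1 > t, W_1 \leq At) \leq C \rho^t$ with $\rho = \sqrt{1-p}$.

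Combining, the second sum is bounded by a multiple of $\sum_t (\rho e^\eta)^t$, which converges provided $\eta < -\tfrac12 \log(1-p)$. Substituting $\eta = |\l| m$, this holds for all $|\l| < \tfrac{1}{2m}|\log(1-p)|$, so the lemma follows with $\l_0 = \l_0(m) := \tfrac{1}{2m}\log(1-p) < 0$.
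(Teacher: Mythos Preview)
Your proof is correct. Both your argument and the paper's rest on the same key observation: if the chain avoids $0$ while accumulating bounded total mass, it must make many visits to a fixed finite set, and from each such visit there is a uniformly positive chance of jumping to $0$, yielding geometric decay. The difference is only in packaging. The paper writes $W_1 - m\sigma_1 = \sum_{i=1}^{\sigma_1}(V_i - m)$ and observes directly that the negative terms come only from indices with $V_i < m$, each contributing at least $-(m-1)$; this gives $E[e^{\lambda(W_1 - m\sigma_1)}\ind{\sigma_1<\infty}] \le E[e^{-\lambda(m-1)N_m}]$ with $N_m = \#\{i\le\sigma_1: V_i < m\}$, and then bounds $P(N_m>n)$ geometrically in one line via the Markov property, arriving at $\lambda_0(m) = \tfrac{1}{m-1}\log P(V_1\neq 0\mid V_0=\lceil m\rceil-1)$. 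Your route---decomposing over $\{\sigma_1=t\}$, splitting on $W_1\lessgtr At$, and pigeonholing on $\{\sigma_1>t,\, W_1\le At\}$ to force $\ge t/2$ visits to $\{1,\dots,2A\}$---reaches the same destination with a few more steps and a different explicit $\lambda_0$. The paper's argument is a bit more streamlined (no separate case $m\le 1$, no two-parameter decomposition), while yours makes the geometric tail bound on $\{\sigma_1>t,\,W_1\le At\}$ explicit as a standalone estimate. A couple of minor imprecisions in your write-up (off-by-one between $\lceil t/2\rceil$ and $\lceil (t-1)/2\rceil$, and $\sigma_1\ge 1$ rather than $\sigma_1\ge 0$) do not affect the conclusion.
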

\begin{proof}
 We need to show that $E[e^{\l W_1 - \l m \s_1} \ind{\s_1 < \infty}] = E[e^{\l\sum_{i=1}^{\s_1}(V_i - m) }\ind{\s_1 < \infty}] < \infty$ for $\l$ negative and sufficiently close to zero. Since $\l<0$ we need to bound the sum in the exponent from below. Note that all the terms in the sum except the last one are larger than $-(m-1)$ and that the terms are non-negative if $V_i \geq m$. Therefore, letting $N_m = \#\{ 1\leq i\leq \s_1 : \, V_i < m \}$ we obtain that 
\[
 E[e^{\l W_1 - \l m \s_1} \ind{\s_1 < \infty}] \leq E[ e^{- \l (m-1) N_m } ]. 
\]
To show that this last expectation is finite for $\l$ close to zero, we need to show that $N_m$ has exponential tails. To this end, note that the event $\{N_m > n \}$ implies that the first $n$ times that the process $V_i < m$, the following step is not to $0$. Thus, 
\[
 P( N_m > n ) \leq \left( \max_{k < m} P(V_1 \neq 0 \, | \, V_0 = k ) \right)^n = P(V_1 \neq 0 \, | \, V_0 = \lceil m \rceil -1 )^n.
\]
Therefore, the statement of the Lemma holds with 
\[
 \l_0(m) = \frac{1}{m-1} \log  P(V_1 \neq 0 \, | \, V_0 = \lceil m \rceil -1 ).
\]
\end{proof}

\begin{cor}\label{LVnear0}
 If $\d>2$ (so that $E[W_1], E[\s_1] < \infty$), then there exists a $\l_1 < 0$ such that on the interval $(\l_1,0)$
\begin{enumerate}
 \item $\L_{W,\s}(\l,-\L_V(\l)) = 0$.
 \item $\L_V(\l)$ is analytic and strictly convex as a function of $\l$. 
 \item $\lim_{\l\ra 0^+} \L_V'(\l) = E[W_1]/E[\s_1] =: m_0$. 
\end{enumerate}
\end{cor}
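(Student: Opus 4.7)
The plan is to derive all three conclusions from a careful analysis of the implicit equation $\L_{W,\s}(\l, -\L_V(\l)) = 0$ near $\l = 0$. Since $\d > 2$, the tail estimates in \eqref{Wstails} give $E[W_1], E[\s_1] < \infty$, so $m_0$ is a well-defined positive constant. I would begin by fixing some $m > m_0$ and invoking the preceding lemma to obtain $\l_1 = \l_0(m) < 0$ such that $\L_{W,\s}(\l, -\l m) < \infty$ throughout $\l \in (\l_1, 0)$.

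For part (i), I would verify that for every $\l \in (\l_1, 0)$ the map $\eta \mapsto \L_{W,\s}(\l, \eta)$ changes sign on $[0, -\l m]$. At $\eta = 0$ it is strictly negative because $P(W_1 > 0) > 0$ (noting that $W_1 = 0$ iff $\s_1 = 1$, together with Assumption \ref{easm}), and at $\eta = -\l m$ it is strictly positive by Jensen's inequality,
\[
E[e^{\l(W_1 - m \s_1)}] \geq e^{\l(E[W_1] - m E[\s_1])} > 1,
\]
using $\l < 0$ and $E[W_1] - m E[\s_1] < 0$. Strict monotonicity of $\L_{W,\s}(\l, \cdot)$ in $\eta$ together with the intermediate value theorem then produces a unique $\eta_*(\l) \in (0, -\l m)$ solving $\L_{W,\s}(\l, \eta_*(\l)) = 0$, and by the definition of $\L_V$ this forces $\eta_*(\l) = -\L_V(\l)$. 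For part (ii), the point $(\l, \eta_*(\l))$ lies strictly inside $\mathcal{D}_{W,\s}$ because the finiteness of $\L_{W,\s}$ extends slightly past the line $\eta = -\l m$; since $\partial_\eta \L_{W,\s}(\l, \eta_*(\l)) > 0$, the implicit function theorem applied to the real-analytic $\L_{W,\s}$ yields analyticity of $\L_V$ on $(\l_1, 0)$, and strict convexity follows from the strict convexity of $\L_{W,\s}$ on $\mathcal{D}_{W,\s}$, exactly as at the end of Lemma \ref{lveryneg}.

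Part (iii) is the main work. By convexity and $\L_V(0) = 0$, the limit $L := \lim_{\l \to 0^-} \L_V'(\l) = \lim_{\l \to 0^-} \L_V(\l)/\l$ exists in $[0, \infty]$, and applying Jensen to the identity $E[e^{\l W_1 - \L_V(\l) \s_1}] = 1$ gives $\L_V(\l) \geq \l m_0$, hence $L \leq m_0$. For the matching lower bound I would argue by contradiction. Set $\psi_\rho(\mu) := E[e^{-\mu(W_1 - \rho \s_1)}]$ and $\rho(\l) := \L_V(\l)/\l \in (0, m_0)$, so $\psi_{\rho(\l)}(|\l|) = 1$ for $\l \in (\l_1, 0)$. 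If some subsequence satisfies $\rho(\l_n) \to \rho^* < m_0$ with $\l_n \to 0^-$, then $\psi_{\rho^*}$ is strictly convex in $\mu$ with $\psi_{\rho^*}(0) = 1$ and $\psi_{\rho^*}'(0) = \rho^* E[\s_1] - E[W_1] < 0$, so its smallest positive root $\mu^+(\rho^*)$ is strictly positive (possibly $+\infty$ if $\rho^* = 0$). Joint continuity of $(\rho, \mu) \mapsto \psi_\rho(\mu)$ on the wedge $\{0 \leq \rho \leq m,\, 0 \leq \mu < |\l_1|\}$, justified via dominated convergence with the integrable dominator $e^{-\mu W_1 + \mu m \s_1}$ furnished by the preceding lemma, then contradicts $\mu^+(\rho(\l_n)) = |\l_n| \to 0$.

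The main obstacle is the continuity argument underlying part (iii): the tilted exponentials $e^{-\mu(W_1 - \rho \s_1)}$ involve $\s_1$, which has only polynomial tails, so uniform integrability is not automatic as $(\rho, \mu) \to (\rho^*, \mu^+(\rho^*))$. The key observation making the proof tractable is that the preceding lemma produces the integrable dominator $e^{-\mu W_1 + \mu m \s_1}$ precisely on the wedge $\rho \leq m$, which is exactly what is needed for the contradiction. Beyond this, the argument is a routine implicit-function-style continuity chase combined with the strict convexity of $\psi_\rho$.
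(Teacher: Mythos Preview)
Your proof is correct, but it takes a somewhat different route from the paper's, particularly in parts (i) and (iii).

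For part (i), the paper works along the line $\eta = -m_0\l$ rather than $\eta = -m\l$ with $m > m_0$. Because $E[W_1 - m_0\s_1] = 0$, Jensen gives no information there, so the paper instead differentiates $g(\l) = \L_{W,\s}(\l,-m_0\l)$, shows $g'(\l) < g'(0^-) = 0$ by strict convexity, and concludes $g(\l) > g(0) = 0$ on $(\l_1,0)$. Your choice of $m > m_0$ makes Jensen's inequality do the work directly, since $\l(E[W_1] - mE[\s_1]) > 0$; this is a cleaner argument. (Your claim that $(\l,\eta_*(\l))$ lies in the interior of $\mathcal{D}_{W,\s}$ is correct, though the phrase ``extends slightly past the line'' is imprecise: the point is that $\eta_*(\l) < -m\l$ strictly and the wedge $\{(\l',-m\l'): \l' \in (\l_1,0)\}$ already lies in $\mathcal{D}_{W,\s}$.)

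For part (iii), the paper's argument is more direct than your contradiction. Having established $\L_V(\l) > m_0\l$, the paper takes any $m < m_0$, sets $g_m(\l) = \L_{W,\s}(\l,-m\l)$, and computes $g_m'(0^-) = E[W_1] - mE[\s_1] > 0$, so $g_m(\l) < 0$ for $\l$ in some $(\l_2(m),0)$, forcing $\L_V(\l) < m\l$ there. This sandwiches $\L_V(\l)/\l \in (m,m_0)$ near $0$, and letting $m \uparrow m_0$ finishes. Your compactness argument via the roots of $\psi_\rho$ works, but the dominated-convergence justification needs a touch more care than you indicate: the stated dominator $e^{-\mu W_1 + \mu m\s_1}$ depends on $\mu$, so one should bound it uniformly by $1 + e^{-\mu' W_1 + \mu' m\s_1}$ for a fixed $\mu' \in (0,|\l_1|)$ (splitting on the sign of $m\s_1 - W_1$), which is integrable by the preceding lemma. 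With that fix, both approaches are valid; the paper's is shorter, yours is more ``soft''.
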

\begin{proof}
 Let $\mathcal{D}_{W,\s} = \{ (\l,\eta) : \, \L_{W,\s}(\l,\eta) < \infty \}$ be the domain where $\L_{W,\s}$ is finite, and let $D_{W,\s}^\circ$ be the interior of $D_{W,\s}$. 
Define $m_0 = E[W_1]/E[\s_1]$. 
Then, if $0 > \l > \l_0(m) m/m_0$ for some $m>m_0$, it follows that 
\[
 \L_{W,\s}(\l,-\l m_0) \leq \L_{W,\s}\left(\frac{\l m_0}{m}, -\l m_0 \right) < \infty,
\]
where the first inequality follows from the monotonicity of $\L_{W,\s}$ and the last inequality follows from Lemma \ref{slope}.
Thus, $(\l,-\l m_0) \in \mathcal{D}_{W,\s}^{\circ}$
if $\l \in (\l_1, 0)$ with $\l_1 = \inf_{m>m_0} \l_0(m) m/m_0$. 

Since $\L_{W,\s}$ is analytic and strictly convex in $D_{W,\s}^\circ$, the function $g(\l) = \L_{W,\s}(\l,-m_0\l)$ is strictly convex and analytic on the interval $(\l_1,0)$. 
In particular, $g$ is differentiable and 
\[
 g'(\l) = \frac{d}{d \l} \log E\left[ e^{\l(W_1 -m_0 \s_1)} \right] = \frac{E\left[ (W_1-m_0 \s_1) e^{\l(W_1 -m_0 \s_1)} \right]}{E\left[ e^{\l(W_1 -m_0 \s_1)} \right]}.
\]
Since $g$ is strictly convex, 
\[
 g'(\l) < \lim_{\l\ra 0^-} g'(\l) = E[ W_1 - m_0 \s_1 ] = 0, \qquad \forall \l \in (\l_1,0). 
\]
Therefore, $g(\l)$ is strictly decreasing on $(\l_1,0)$. Since, $\lim_{\l\ra 0^-} g(\l) = g(0) = 0$ we obtain that $g(\l) = \L_{W,\s}(\l,-m_0 \l) > 0$ for $\l \in (\l_1,0)$. 
Thus, for every $\l \in (\l_1,0)$ there exists an $\eta \in (0, -m_0 \l)$ such that $\L_{W,\s}(\l,\eta) = 0$, and the definition of $\L_V$ implies that  $\eta = -\L_V(\l)$. 
We have shown that $\L_{W,\s}(\l,-\L_V(\l)) = 0$ and $(\l,-\L_V(\l)) \in \mathcal{D}_{W,\s}^\circ$ for all $\l\in(\l_1,0)$. As was the case in the proof of Lemma \ref{lveryneg} these facts imply that $\L_V(\l)$ is analytic and strictly convex on $(\l_1,0)$. 

To show that $\lim_{\l\ra 0^-} \L_V'(\l) = m_0$, first note that as was shown above $\L_V(\l) > m_0 \l$ for $\l<0$. For $m<m_0$ define $g_m(\l) = \L_{W,\s}(\l,-m \l)$. For $\l$ close enough to $0$ we have that $g_m(\l)$ is strictly convex and analytic, and that
\[
 g_m'(\l) = \frac{d}{d \l} \log E\left[ e^{\l(W_1 -m \s_1)} \right] = \frac{E\left[ (W_1-m \s_1) e^{\l(W_1 -m \s_1)} \right]}{E\left[ e^{\l(W_1 -m \s_1)} \right]}.
\]
Therefore, $\lim_{\l\ra 0^-} g_m'(\l) = E[W_1]-m E[\s_1] > 0$, and thus there exists a $\l_2=\l_2(m)<0$ such that $g_m(\l) = \L_{W,\s}(\l,-m\l) < 0$ for $\l\in(\l_2,0)$. This implies that $m_0 \l < \L_V(\l) < m \l$ for all $\l \in (\l_2,0)$, and thus $\lim_{\l\ra 0^-} \L_V'(\l) \in [m,m_0]$. The proof is finished by noting that this is true for any $m<m_0$. 
\end{proof}

We are now ready to deduce some properties of the rate function $I_V$. 
\begin{lem}\label{infIVlem}
$\inf_x I_V(x) = 0$.
\end{lem}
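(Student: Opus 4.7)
The plan is to bound $\inf_x I_V(x)$ from below and above by $0$, in each direction by applying Theorem \ref{LDPVn0} with $G = F = \R$ and $j = 0$, thereby reducing the problem to an estimate of $P(V_n = 0)$.

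The lower bound $\inf_x I_V(x) \geq 0$ is essentially immediate: the upper-bound half of Theorem \ref{LDPVn0} with $F = \R$ and $j = 0$ gives
\[
 \limsup_{n\ra\infty} \frac{1}{n}\log P(V_n = 0) \leq -\inf_{x\in\R} I_V(x),
\]
and the left side is $\leq 0$ since $P(V_n = 0) \leq 1$.

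For the matching upper bound $\inf_x I_V(x) \leq 0$, the lower-bound half of Theorem \ref{LDPVn0} with $G = \R$ and $j = 0$ gives
\[
 \liminf_{n\ra\infty} \frac{1}{n}\log P(V_n = 0) \geq -\inf_{x\in\R} I_V(x),
\]
so it suffices to show that $P(V_n = 0)$ does not decay exponentially in $n$. By Lemma \ref{Vrt}, the Markov chain $\{V_i\}$ is recurrent whenever $\d \geq 0$. In the positive recurrent regime $\d > 1$ we have $E[\s_1] < \infty$, and the renewal theorem applied to the i.i.d.\ inter-regeneration times $\s_k - \s_{k-1}$ immediately gives $P(V_n = 0) \to 1/E[\s_1] > 0$. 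In the null recurrent regime $\d \in [0,1]$, the polynomial tail $P(\s_1 > x) \sim C_1 x^{-\d}$ from \eqref{Wstails} combined with standard renewal-theoretic estimates produces a polynomial lower bound $P(V_n = 0) \geq c n^{-\a}$, which is still subexponential. Either way, $\frac{1}{n}\log P(V_n = 0) \ra 0$, and we conclude $\inf_x I_V(x) \leq 0$.

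The only nontrivial technical step is the polynomial lower bound on $P(V_n = 0)$ in the null recurrent case $\d \in [0,1]$; the positive recurrent case $\d > 1$, which is the focus of the paper's main results, is handled immediately by the renewal theorem. Once the non-exponential lower bound on $P(V_n = 0)$ is in place, the two halves of Theorem \ref{LDPVn0} combine to give $\inf_x I_V(x) = 0$.
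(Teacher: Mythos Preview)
Your proof has two problems, one minor and one serious.

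\textbf{Minor (bookkeeping).} You have the two halves of Theorem~\ref{LDPVn0} reversed. From the \emph{upper} bound $\limsup_n \frac{1}{n}\log P(V_n=0) \leq -\inf_x I_V(x)$ together with the trivial fact $\limsup \leq 0$, nothing follows about the sign of $\inf_x I_V(x)$; likewise, the \emph{lower} bound $\liminf \geq -\inf_x I_V(x)$ combined with a subexponential lower bound on $P(V_n=0)$ yields $\inf_x I_V(x) \geq 0$, not $\leq 0$. The fix is simply to swap the roles (or, as the paper does, to invoke both bounds simultaneously so that the limit exists and equals $-\inf_x I_V(x)$, and then compute that limit). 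Alternatively, $\inf_x I_V(x)\geq 0$ follows directly from $I_V(x)\geq -\L_V(0)$ and $\L_V(0)\leq 0$.

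\textbf{Serious (missing case).} You only treat $\d\geq 0$, where Lemma~\ref{Vrt} gives recurrence and a renewal argument applies. But the lemma must hold for all $\d$, and when $\d<0$ the chain $\{V_i\}$ is \emph{transient}: $P(\s_1=\infty)>0$, so $\L_{W,\s}(0,0)<0$ and neither the renewal theorem nor the tail asymptotics \eqref{Wstails} (stated only for $\d>0$) are available. This is precisely the case the paper singles out as requiring a separate argument. The paper's route is to bound $P(V_n=0)\geq P(T_n<T_{-1})$ via the representation \eqref{UiVirep}, and then to produce an explicit path-counting estimate for the excited random walk showing $P(T_n<T_{-1})\geq C n^{-M-1}$. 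Some such direct construction is needed here; recurrence-based reasoning cannot substitute for it.

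For $\d\geq 0$ the paper in fact bypasses Theorem~\ref{LDPVn0} entirely and argues by Legendre duality that $\inf_x I_V(x)=-\L_V(0)=0$ (since $\L_{W,\s}(0,0)=\log P(\s_1<\infty)=0$ in the recurrent regime). That is shorter than your renewal-theorem route, though your approach would also work once the inequality directions are corrected.
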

\begin{proof}
Since $I_V$ is the Legendre transform of $\L_V$ and $\L_V$ is lower-semicontinuous, then it follows that $\inf_x I_V(x) = - \L_V(0)$. 
If $\d\geq 0$, then $\L_{W,\s}(0,0) = \log P(\s_1<\infty) = 0$ (by Lemma \ref{Vrt}) and thus $\L_V(0) = 0$ when $\d\geq 0$. 

If $\d < 0$, then $\L_{W,\s}(0,0) = \log P(\s_1<\infty) < 0$ and so we can only conclude a priori\footnote{Note that we cannot conclude that $\L_V(0) < 0$ since we do not know if $\L_{W,\s}(0,\eta)<\infty$ for some $\eta>0$. In fact since $\inf_x I_V(x) = 0$ if and only if $\L_V(0) = 0$, the proof of the lemma shows indirectly that $\L_{W,\s}(0,\eta) = \infty$ for all $\eta > 0$ when $\d <0$.}
that $\L_V(0) \leq 0$.
Instead we will prove $\inf_x I_V(x) = 0$ in a completely different manner. 
First note that letting $F=G=\R$ in Theorem \ref{LDPVn} implies that 
\[
 \lim_{n\ra\infty} \frac{1}{n} \log P(V_n = 0) = -\inf_x I_V(x). 
\]
Therefore, we need to show that $P(V_n = 0)$ does not decay exponentially fast in $n$. The explanation of the representation \eqref{TnVirep} implies that $P(T_n < T_{-1}) = P(U_0^n = 0) \leq P(V_n = 0)$,
and thus we are reduced to showing that $P(T_n < T_{-1})$ does not decay exponentially fast in $n$. 
In fact, we claim that there exists a constant $C>0$ such that
\be\label{TnT1}
 P(T_n < T_{-1}) \geq C n^{-M-1} 
\ee
To see this, suppose that the first $2M+1$ steps of the random walk alternate between 0 and 1. 
The probability of this happening is
\begin{align*}
 P(X_{2i}=0, \, X_{2i+1}=1,\, i=1,2,\ldots M) &= \Ev\left[ \prod_{j=1}^{M+1} \w_0(j)\prod_{j=1}^M (1-\w_1(j)) \right] \\
&= \frac{1}{2} \Ev\left[ \prod_{j=1}^M \w_0(j) \right] \Ev\left[ \prod_{j=1}^M (1- \w_0(j)) \right] > 0.
\end{align*}
At this point the random walker has consumed all the ``cookies'' at the sites $0$ and $1$. Therefore, by a simple symmetric random walk computation, the probability that the random walk from this point hits $x=2$ before $x=-1$ is $2/3$. Since $\d<0$ the random walk will eventually return from $x=2$ to $x=1$ again, and then the probability that the random walk again jumps $M$ more times from $x=1$ to $x=2$ without hitting $x=-1$ is $(2/3)^{M}$. After jumping from $x=1$ to $x=2$ a total of $M+1$ times there are no longer any cookies at $x=2$ either, and thus the probability that the random walk now jumps $M+1$ times from $x=2$ to $x=3$ without visiting $x=-1$ is $(3/4)^{M+1}$. We continue this process at successive sites to the right until the random walk makes $M+1$ jumps from $x=n-2$ to $x=n-1$ without hitting $x=-1$ (which happens with probability $((n-1)/n)^{M+1}$). Upon this last jump to $x=n-1$ the random walk has consumed all cookies at $x=n-1$ and so the probability that the next step is to the right is $1/2$. Putting together the above information we obtain the lower bound
\[
 P(T_n < T_{-1}) \geq  \left( \frac{1}{2} \Ev\left[ \prod_{j=1}^M \w_0(j) \right] \Ev\left[ \prod_{j=1}^M (1- \w_0(j)) \right] \right) \left( \frac{2}{3} \frac{3}{4} \cdots \frac{n-1}{n} \right)^{M+1} \frac{1}{2}. 
\]
This completes the proof of \eqref{TnT1}, and thus $\inf_x I_V(x) = 0$ when $\d<0$. 
\end{proof}

\begin{lem}\label{IVprops}
 The function $I_V(x)$ is convex, non-increasing, and continuous on $[0,\infty)$. Moreover,
\begin{enumerate}
 \item There exists an $m_2>0$ such that $I_V(x)$ is strictly convex and analytic on $(0,m_2)$. 
 \item $I_V(0) = -\log \Ev[\w_0(1)]$ and $\lim_{x \ra 0^+} I_V'(x) = -\infty$. 
 \item If $\d>2$ then there exists an $m_1< m_0 = E[W_1]/E[\s_1]$ such that $I_V(x)$ is strictly convex and analytic on $(m_1,m_0)$, $I_V(x) = 0$ for $x\geq m_0$, and $\lim_{x \ra m_0^-} I_V'(x) = 0$ so that $I_V$ is continuously differentiable on $(m_1,\infty)$. 
 \item If $\d\leq 2$ then $I_V(x) > 0$ for all $x < \infty$. 
\end{enumerate}
\end{lem}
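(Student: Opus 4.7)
The plan is to derive everything from Legendre duality between $\L_V$ and $I_V$, exploiting the analytic/convex structure of $\L_V$ established in Lemmas \ref{infdomain}, \ref{lveryneg}, and \ref{LVnear0}, together with the normalization $\L_V(0) = 0$ from Lemma \ref{infIVlem}. The basic properties come quickly: $I_V$ is convex as a Legendre transform, and since $\L_V(\l) = \infty$ for $\l > 0$ the sup reduces to $I_V(x) = \sup_{\l \leq 0}(\l x - \L_V(\l))$, which is non-increasing in $x \geq 0$. Evaluating at $x = 0$ gives $I_V(0) = -\inf_{\l \leq 0}\L_V(\l) = -\log \Ev[\w_0(1)]$ by Lemma \ref{infdomain}, which is finite by Assumption \ref{easm}; the convexity of $I_V$ together with this finite boundary value yields continuity on $[0, \infty)$.

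For (i)--(iii) I would apply the standard duality principle: on any open interval where $\L_V$ is strictly convex and analytic, $\L_V'$ is a strictly increasing analytic bijection onto its range, and its inverse equals $I_V'$ on the dual interval, where $I_V$ inherits strict convexity and analyticity. By Lemma \ref{lveryneg}, $\L_V$ is strictly convex and analytic on $(-\infty, \log \Ev[\w_0(1)])$; boundedness of $\L_V$ there (Lemma \ref{infdomain}) forces $\L_V'(\l) \to 0^+$ as $\l \to -\infty$, so $\L_V'$ maps it onto some $(0, m_2)$, giving (i) and $\lim_{x \to 0^+} I_V'(x) = -\infty$ to complete (ii). For (iii), Corollary \ref{LVnear0} provides the analogous strictly-convex analytic interval $(\l_1, 0)$ with $\L_V'(\l) \to m_0$ as $\l \to 0^-$, yielding strict convexity and analyticity of $I_V$ on $(m_1, m_0)$ with $m_1 = \L_V'(\l_1^+)$ and $\lim_{x \to m_0^-} I_V'(x) = 0$. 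For $x \geq m_0$, convexity of $\L_V$ combined with $\L_V(0) = 0$ and left-derivative $m_0$ gives $\L_V(\l) \geq m_0 \l$ for all $\l \leq 0$, so $\l x - \L_V(\l) \leq \l(x - m_0) \leq 0$; combined with the trivial lower bound $I_V(x) \geq -\L_V(0) = 0$, this pins down $I_V(x) = 0$, and $C^1$ regularity at $m_0$ is then immediate.

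For (iv), the crux is to prove $\L_V'(0^-) = +\infty$ when $\d \leq 2$, since this gives, for any $x < \infty$, that $\L_V(\l)/\l > x$ for $\l$ close to $0^-$, whence $\l x - \L_V(\l) = \l(x - \L_V(\l)/\l) > 0$ and so $I_V(x) > 0$. To establish $\L_V'(0^-) = \infty$ I would run the argument of Corollary \ref{LVnear0} in reverse: fix arbitrary $m < \infty$ and consider $g_m(\l) = \L_{W, \s}(\l, -m\l)$, which is finite on some $(\l_0(m), 0]$ by Lemma 3.5 and convex in $\l$, with $g_m(0) = \log P(\s_1 < \infty) \leq 0$. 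If $P(\s_1 < \infty) < 1$ (only possible when $\d < 0$), then $g_m(\l) < 0$ for $\l$ near $0^-$ by continuity. Otherwise one argues $g_m'(0^-) = +\infty$, which together with convexity and $g_m(0) = 0$ forces $g_m(\l) < 0$ for $\l$ slightly negative. In either case $g_m(\l) < 0$ gives $\L_V(\l) \leq m\l$, hence $\L_V(\l)/\l \geq m$; since $m$ was arbitrary, $\L_V'(0^-) = \infty$.

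The main obstacle will be showing $g_m'(0^-) = +\infty$, which (after differentiating under the integral and passing to the limit via monotone convergence) reduces to verifying $E[(W_1 - m\s_1)^+ \ind{\s_1 < \infty}] = \infty$ for every $m$ when $0 \leq \d \leq 2$, while keeping the negative part bounded. The pointwise estimate $(W_1 - m\s_1)^- \leq (m-1)N_m + m$ extracted from the proof of Lemma 3.5, combined with the exponential tails of $N_m$, keeps $E[(W_1 - m\s_1)^- \ind{\s_1 < \infty}]$ finite. For the positive part when $\d > 0$, the tail asymptotics of \eqref{Wstails} give $P(W_1 > 2mK, \s_1 \leq K) \geq P(W_1 > 2mK) - P(\s_1 > K) \sim c K^{-\d/2}$ for large $K$ (using $\d/2 < \d$), so $E[W_1 \ind{W_1 > 2mK, \s_1 \leq K}] \gtrsim K^{1 - \d/2} \to \infty$ whenever $\d < 2$; the borderline $\d = 2$ case follows because $E[W_1] = \infty$ while the negative part is finite, and the $\d = 0$ boundary is handled analogously using the null recurrence of $V_i$.
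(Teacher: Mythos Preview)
Your treatment of the basic properties and of items (i)--(iii) is essentially identical to the paper's proof: both derive everything from Legendre duality and the strict convexity/analyticity of $\L_V$ on the intervals $(-\infty,\log\Ev[\w_0(1)])$ and $(\l_1,0)$, reading off $I_V' = \l(x)$ via the inverse-function theorem.

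For item (iv), your route genuinely diverges from the paper's.  The paper pushes the $g_m$ argument only in the range $\d\in(1,2]$, where $E[\s_1]<\infty$ makes $g_m'(0^-)=E[W_1]-mE[\s_1]=\infty$ immediate, and then switches to an entirely different ``soft'' argument for $\d\leq 1$: it introduces $\L_1(\l)=\limsup_n n^{-1}\log E[e^{\l\sum V_i}]$, bounds $I_V(x)\geq\sup_{\l<0}(\l x-\L_1(\l))$, and shows $\L_1(\l)/\l\to\infty$ by coupling $V_i$ with a modified chain $\tilde V_i$ that resets to $0$ whenever it drops below a threshold $K$.  Your approach instead keeps the $g_m$ machinery running throughout.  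Two of your ideas here are genuinely nice and simpler than what the paper does or contemplates: for $\d<0$ your observation that $g_m(0)=\log P(\s_1<\infty)<0$ plus left-continuity of $g_m$ (via domination by $e^{|\l|(m-1)N_m}$) gives $g_m(\l)<0$ near $0^-$ with no tail analysis at all --- the paper explicitly says this case ``would be more difficult'' because it expected to need the conditional tails of $\s_1$; and for $\d\in(0,2)$ your inclusion--exclusion bound $P(W_1>2mK,\,\s_1\leq K)\geq P(W_1>2mK)-P(\s_1>K)$ exploits only the \emph{marginal} asymptotics in \eqref{Wstails}, neatly sidestepping the joint-tail analysis the paper thought would be required.

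There is, however, a real gap at $\d=0$.  The tail asymptotics \eqref{Wstails} are stated (and proved in \cite{kmLLCRW}) only for $\d>0$, so your lower bound on $E[(W_1-m\s_1)^+]$ is unavailable, and your fallback ``handled analogously using the null recurrence of $V_i$'' is not an argument: null recurrence gives $E[\s_1]=\infty$, but both $E[W_1]$ and $E[\s_1]$ are infinite and you would still need to show the positive part dominates.  Your $\d<0$ trick does not help either, since $g_m(0)=0$ when $\d=0$.  The paper's $\tilde V_i$ argument, by contrast, handles all $\d\leq 1$ uniformly.  So either fill this single boundary case (e.g.\ by establishing suitable tail bounds at $\d=0$, or by a direct estimate on $E[(W_1-m\s_1)^+]$ that does not go through \eqref{Wstails}), or patch in the paper's soft argument just for $\d=0$.
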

\begin{proof}
Since $I_V$ is the Legendre transform of $\L_V$, $I_V(x)$ is convex and lower-semicontinuous as a function in $x$.  
It follows easily from Lemma \ref{infdomain} and the definition of $I_V$ that $I_V(x)<\infty$ if and only if $x \in[0,\infty)$, 
and since $I_V$ is convex and lower-semicontinuous this shows that $I_V$ is continuous on $[0,\infty)$. 
The fact that $I_V(x)$ is non-increasing follows from the fact that $\L_V(\l) = \infty$ for any $\l > 0$. Indeed, if $x_1 \leq x_2$ then 
\[
 I_V(x_1) = \sup_{\l\leq 0} \left\{ \l x_1 - \L_V(\l) \right\} \geq \sup_{\l\leq 0} \left\{ \l x_2 - \L_V(\l) \right\} = I_V(x_2). 
\]

Next, recall from Lemma \ref{lveryneg} that $\L_V(\l)$ is strictly convex and analytic on $(-\infty,\log \Ev[\w_0(1)])$ and let $m_2 = \lim_{\l \ra \log \Ev[\w_0(1)])^-} \L_V'(\l)$. 
The fact that $\L_V(\l)$ is non-decreasing and uniformly bounded below also implies that $\lim_{\l \ra -\infty} \L_V'(\l) = 0$. Therefore, for every $x \in (0,m_2)$ there exists a unique $\l = \l(x)$ such that $\L_V'(\l(x)) = x$ and so 
\be\label{IVform}
I_V(x) = \l(x) x - \L_V(\l(x)) \quad \text{for } x \in (0,m_2). 
\ee
Since $\L_V(\l)$ is analytic on $(-\infty, \log \Ev[\w_0(1)])$ the inverse function theorem implies that $\l(x)$ is analytic on $(0,m_2)$ and thus \eqref{IVform} implies that $I_V(x)$ is  analytic on $(0,m_2)$ as well.  
To see that $I_V(x)$ is strictly convex on $(0,m_2)$, we differentiate \eqref{IVform} with respect to $x$ and use the fact that $\L'_V(\l(x)) = x$ for $x \in (0,m_2)$ to obtain  
\be\label{IVderiv}
 I'_V(x) = \l(x), \quad \text{for } x \in (0,m_2). 
\ee
Since $\l(x)$ is strictly increasing on $(0,m_2)$, it follows that $I_V$ is strictly convex on $(0,m_2)$. 
Moreover, \eqref{IVderiv} implies that $\lim_{x \ra 0^+} I'_V(x) = \lim_{x \ra 0^+} \l(x) = -\infty$ and Lemma \ref{infdomain} \ref{LVlbounds} implies that that $I_V(0) = -\inf_\l \L_V(\l) = -\log \Ev[\w_0(1)]$.

When $\d>2$, Lemma \ref{LVnear0} implies that $\L_V(\l)$ is analytic and strictly convex on $(\l_1,0)$. Let $m_1 = \lim_{\l \ra \l_1^+} \L_V'(\l)$ and recall that $\lim_{\l \ra 0^-} \L_V'(\l) = m_0 = E[W_1]/E[\s_1]$. Then the same argument as above shows that $I_V(x)$ is strictly convex and analytic on $(m_1,m_0)$ and that $\lim_{x \ra m_0^-} I_V'(x) = 0$. Now, since $\L_V'(\l)$ increases to $m_0$ as $\l \ra 0^-$ and $\L_V(0) = 0$, then $\L_V(\l) \geq m_0 \l$ for all $\l\leq 0$. Therefore 
\[
 I_V(x) = \sup_{\l \leq 0} \left\{ \l x - \L_V(\l) \right\} \leq \sup_{\l \leq 0} \left\{ \l m_0 - \L_V(\l) \right\} = 0, \quad \text{ for all } x \geq m_0, 
\]
where the first equality follows from the fact that $\L_V(\l) = \infty$ if $\l>0$. However, since $I_V(x) \geq -\L_V(0) = 0$ it must be that $I_V(x) = 0$ for $x \geq m_0$.

It remains only to show that $I_V(x)>0$ for all $x$ when $\d\leq 2$. We will divide the proof into two cases: $\d \in (1,2]$ and $\d \leq 1$. 

\noindent\textbf{Case I:} $\d \in (1,2]$.\\
For any $m<\infty$ let $g_m(\l) = \L_{W,\s}(\l, -m \l)$. Then, as in the proof of Corollary \ref{LVnear0}, $g_m(\l)$ is analytic and strictly convex for $\l<0$ close enough to zero. Moreover, 
\[
 \lim_{\l \ra 0^-} g_m'(\l) = E[W_1 - m \s_1] = \infty,
\]
where the last equality holds since the tail decay of $W_1$ and $\s_1$ in \eqref{Wstails} implies that  $E[W_1] = \infty$ and $E[\s_1] < \infty$ when $\d \in (1,2]$.
Since $g_m(0) = \L_{W,\s}(0,0) = 0$ this implies that $g_m(\l) = \L_{W,\s}(\l, -m \l) < 0$ for $\l<0$ sufficiently close to 0, and therefore $\limsup_{\l \ra 0^-} \L_V(\l)/\l \geq m $. Since this is true for any $m<\infty$ and since $\L_V(\l)$ is convex, 
it follows that $\lim_{\l \ra 0^-} \L_V(\l)/\l = \infty$. 
Thus, for any $x<\infty$ there exists a $\l'<0$ such that $\L_V(\l') < \l' x$ and so 
$I_V(x) \geq \l' x - \L_V(\l') > 0$. 

\noindent\textbf{Case II:} $\d \leq 1$. \\
As in the case $\d \in (1,2]$ we could proceed by arguing that $g_m'(\l) \ra E[(W_1 - m\s_1)\ind{\s_1<\infty}]$. However, we would need to then show that this last expectation is infinite, and this would require an analysis of the joint tail behavior of $(W_1,\s_1)$.
This could probably be achieved in the case $\d \in (0,1)$ by adapting the arguments of Kosygina and Mountford in \cite{kmLLCRW}, however when $\d<0$ it would be more difficult since in that case the Markov chain is transient and we would need to analyze the tails of $\s_1$ conditioned on $\s_1<\infty$. 
 It is possible that such an approach would work, but we will give a softer argument instead. 

Let $\L_1(\l) = \limsup_{n\ra\infty} \frac{1}{n} \log E[ e^{\l \sum_{i=1}^n V_i } ]$.
Then, the standard Chebyshev large deviation upper bound implies that 
\[
 \limsup_{n\ra\infty} \frac{1}{n} \log P\left( \sum_{i=1}^n V_i < xn \right) \leq - \sup_{\l < 0} ( \l x - \L_1(\l)). 
\]
On the other hand, Theorem \ref{LDPVn0} and the fact that $I_V$ is non-increasing implies that 
\[
 \lim_{n\ra\infty} \frac{1}{n} \log P\left( \sum_{i=1}^n V_i < xn , \, V_n = 0 \right) = -I_V(x). 
\]
Thus, we see that $I_V(x) \geq  \sup_{\l < 0} ( \l x - \L_1(\l))$ for any $x<\infty$. 
Then, similar to the case $\d \in (1,2]$ above, it will follow that $I_V(x)>0$ for all $x<\infty$ if we can show that $\lim_{\l\ra 0^-} \L_1(\l)/\l = \infty$.

Fix an integer $K \geq 1$. If $\l<0$, then $\l \sum_{i=1}^n V_i \leq \l K \sum_{i=1}^n \ind{V_i \geq K }$. Thus, 
\be\label{Kdec}
 E[e^{\l \sum_{i=1}^n V_i} ] \leq e^{\l K (1-\theta)n} + P\left( \sum_{i=1}^n \ind{V_i < K} > \theta n \right). 
\ee
Recall the construction of the process $V_i$ in Section \ref{Visec} and define $\tilde{V}_i$ by $\tilde{V}_0 = 0$ and 
\[
 \tilde{V}_{i+1} = F^{(i)}_{\tilde{V}_{i} + 1} \ind{F^{(i)}_{\tilde{V}_{i} + 1} \geq K}.  
\]
That is, jumps are governed by the same process as the jumps of the Markov chain $V_i$ with the exception that any attempted jump to a site in $[1,K-1]$ is replaced by a jump to 0. 
Note that the above construction of $\tilde{V}_i$ gives a natural coupling with $V_i$ so that $\tilde{V}_i \leq V_i$ for all $i$. Let $\tilde{\s}_k$, $k=1,2,\ldots$ be the successive return times to $0$ of the Markov chain $\tilde{V}_i$. Then, since $\tilde{V}_i < K$ implies that $\tilde{V}_i = 0$, 
\[
 P\left( \sum_{i=1}^n \ind{V_i < K} > \theta n \right) \leq P\left( \sum_{i=1}^n \ind{\tilde{V_i} < K} > \theta n \right) \leq P( \tilde{\s}_{\lceil \theta n \rceil } \leq n ). 
\]
Since $\tilde{\s}_k$ is the sum of $k$ i.i.d.\ random variables, Cramer's Theorem implies that this last probability decays on an exponential scale like $e^{-n \theta I_{\tilde{\s}}(1/\theta)}$, where $I_{\tilde{\s}}$ is the large deviation rate function for $\tilde{\s}_k/k$. 

Recalling \eqref{Kdec}, we see that $\L_1(\l) \leq - \min \{ \l K(\theta-1), \, \theta I_{\tilde{\s}}(1/\theta) \}$. Optimizing over $\theta \in (0,1)$ gives
\be\label{supmin}
 \L_1(\l) \leq - \sup_{\theta \in (0,1)} \min\{ \l K(\theta -1), \, \theta I_{\tilde{\s}}(1/\theta) \}. 
\ee
The modified Markov chain $\tilde{V}_i$ inherits the same recurrence/transience properties that $V_i$ has. In particular, $\tilde{V}_i$ is null-recurrent if $\d \in [0,1]$ and transient if $\d < 0$. In either case $E[\tilde{\s}_1] = \infty$ and so it can be shown that $I_{\tilde{\s}}(x)$ is convex, non-increasing, and $I_{\tilde{\s}}(x)>0$ for $x\in [1,\infty)$. 
Therefore, the function $\theta \mapsto \theta I_{\tilde{\s}}(1/\theta)$ is convex and strictly increasing on $(0,1)$ and approaches $0$ as $\theta \ra 0$. Thus, there exists an inverse function $h$ so that $h(x) I_{\tilde{\s}}(1/h(x)) = x$ and $h(x) \ra 0$ as $x \ra 0$. 
We will use this information to analyze the upper bound in \eqref{supmin}.

\begin{figure}
\includegraphics{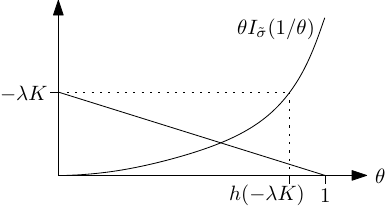}
\caption{For any fixed $\l<0$, the supremum in \eqref{supmin} is attained at the intersection of the two curves. A lower bound for the supremum is obtained by evaluating the line $\l K(\theta -1)$ at $\theta = h(-\l K)$. 
\label{grint}}
\end{figure}
Since the first term in the minimum of \eqref{supmin} is decreasing in $\theta$ and the second term in the minimum is increasing in $\theta$, the supremum is obtained for the value of $\theta$ that makes the two terms in the minimum equal. Thus, the supremum is greater than $\l K(h(-\l K)-1)$ (see Figure \ref{grint}) which in turn implies that $\L_1(\l) \leq \l K(1-h(-\l K))$. Therefore, 
\[
 \liminf_{\l \ra 0^-} \frac{\L_1(\l)}{\l} \geq \lim_{\l \ra 0^-} K( 1- h(-\l K)) = K. 
\]
Since the above argument works for any finite $K$, this implies that $\lim_{\l \ra 0^-} \L_1(\l)/\l = \infty$. 
\end{proof}

\section{Large Deviations for Hitting Times}\label{LDPTsec}

The large deviation principles for $n^{-1} \sum_{i=1}^n V_i$ in Theorems \ref{LDPVn0} and \ref{LDPVn} imply a large deviation principle for the hitting times. 

\begin{thm}\label{LDPTn}
 Let $I_T(t) = I_V((t-1)/2)$. Then, $T_n/n$ satisfies a large deviation principle with convex rate function $I_T(t)$. 
That is, 
\be\label{LDPTnlb}
 \liminf_{n\ra\infty} \frac{1}{n} \log P\left( T_n/n \in G \right) \geq - \inf_{x \in G} I_V(x),
\ee
for all open $G$, and 
\be\label{LDPTnub}
 \limsup_{n\ra\infty} \frac{1}{n} \log P\left( T_n/n \in F \right) \leq - \inf_{x \in F} I_V(x),
\ee
for all closed $F$. 
Moreover, the following qualitative properties are true of the rate function $I_T$. 
\begin{enumerate}
 \item $I_T(t)$ is convex, non-increasing, and continuous on $[1,\infty)$, and there exists a $t_2 > 1$ such that $I_T(t)$ is strictly convex and analytic on $(1,t_2)$. 
 \item $I_T(1) = -\log \Ev[\w_0(1)]$ and $\lim_{t\ra 1^+} I_T'(t) = -\infty$. 
 \item If $\d>2$, then $I_T(t) = 0 \iff t \geq 1/v_0$. Moreover, there exists a $t_1 < 1/v_0$ such that $I_T(t)$ is strictly convex and analytic on  $(t_1, 1/v_0)$ and continuously differentiable on $(t_1,\infty)$.
 \item If $\d \leq 2$, then $I_T(t)>0$ for all $t<\infty$ and $\lim_{t\ra\infty} I_T(t) = 0$. 
\end{enumerate}
\end{thm}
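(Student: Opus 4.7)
The plan is to derive the LDP directly from the distributional identity \eqref{TnVirep},
$T_n \overset{\mathcal{D}}{=} n + 2\sum_{i=1}^n V_i + 2\sum_{i\geq 1} V_i^{(n)}$,
which reduces matters to the LDP for $n^{-1}\sum_{i=1}^n V_i$ already proved in Theorems \ref{LDPVn0} and \ref{LDPVn}, together with enough control on the ``tail'' contribution $\sum_i V_i^{(n)}$ to show that it is negligible on the exponential scale. The identification $I_T(t)=I_V((t-1)/2)$ is forced by the fact that, up to the tail term, $T_n/n = 1 + 2 \cdot \tfrac{1}{n}\sum_{i=1}^n V_i$.

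For the lower bound \eqref{LDPTnlb}, fix an open $G$ and let $G'=\{(t-1)/2 : t\in G\}$, which is open. Since $F^{(i)}_0=0$, the state $0$ is absorbing for $V^{(n)}$, so on $\{V_n=0\}$ one has $V_i^{(n)}=0$ for all $i\geq 0$ and hence $T_n/n=1+\tfrac{2}{n}\sum_{i=1}^n V_i$. This gives $P(T_n/n\in G)\geq P(\tfrac{1}{n}\sum_{i=1}^n V_i\in G',\, V_n=0)$, and Theorem \ref{LDPVn0} with $j=0$ yields $\liminf \tfrac{1}{n}\log P(T_n/n\in G)\geq -\inf_{x\in G'} I_V(x)=-\inf_{t\in G} I_T(t)$.

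For the upper bound \eqref{LDPTnub}, let $F$ be closed with $t_0=\inf F$ and $x_0=(t_0-1)/2$. Because $I_V$, and hence $I_T$, is non-increasing by Lemma \ref{IVprops}, $\inf_{t\in F} I_T(t)=I_V(x_0)$. On $\{T_n/n\in F\}$ one has $\sum_{i=1}^n V_i + \sum_i V_i^{(n)}\geq n x_0$, so for any $\e>0$ a union bound gives
$P(T_n/n\in F) \leq P(\tfrac{1}{n}\sum_{i=1}^n V_i \geq x_0-\e) + P(\sum_i V_i^{(n)}\geq n\e)$.
Theorem \ref{LDPVn} bounds the first term by $\exp(-n(I_V(x_0-\e)+o(1)))$, and by continuity of $I_V$ this tends to $-I_V(x_0)$ as $\e\to 0$. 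The key difficulty is to show that the second term decays strictly faster than $e^{-nI_V(x_0)}$ for each fixed $\e>0$. The natural strategy is to condition on $V_n$: because $V^{(n)}$ lacks the stabilizing migrant and has $0$ as an absorbing state, once any cookies are exhausted it behaves as a branching process with migration of mean $-\d$ and Geometric$(1/2)$ offspring, which is effectively subcritical and so has total progeny with exponential tails when started from a bounded initial population. Coupling this with a tail estimate on $V_n$ (using the regeneration structure of Section \ref{Visec} when $\d>1$, and handling the recurrent/transient cases for $V$ separately otherwise) furnishes the required superexponential bound.

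The qualitative properties of $I_T$ transfer directly from Lemma \ref{IVprops} under the linear change $x=(t-1)/2$. Convexity, monotonicity, continuity, and strict convexity/analyticity on $(1,t_2)$ with $t_2=1+2m_2$ are immediate, and property (ii) uses $I_T(1)=I_V(0)=-\log\Ev[\w_0(1)]$ together with $I_T'(1^+)=\tfrac{1}{2}I_V'(0^+)=-\infty$. For (iii) with $\d>2$, $I_T(t)=0\iff (t-1)/2\geq m_0\iff t\geq 1+2E[W_1]/E[\s_1]$, and by the formula \eqref{speedformula} this threshold is precisely $1/v_0$; strict convexity and analyticity on $(t_1,1/v_0)$ with $t_1=1+2m_1$, as well as $C^1$-regularity at $1/v_0$, follow at once. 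Property (iv) combines Lemma \ref{IVprops}(iv) with Lemma \ref{infIVlem} to give $I_T(t)>0$ for all $t<\infty$ and $\lim_{t\to\infty} I_T(t)=\lim_{x\to\infty} I_V(x)=\inf_x I_V(x)=0$.
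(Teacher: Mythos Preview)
Your lower bound and the transfer of qualitative properties from $I_V$ to $I_T$ are correct and match the paper. The upper bound, however, contains a genuine error. You write that since $I_T$ is non-increasing, $\inf_{t\in F} I_T(t)=I_T(\inf F)=I_V(x_0)$; but for a non-increasing function the infimum over $F$ is attained at $\sup F$, not at $\inf F$. This is not a cosmetic slip: it propagates to the next line, where you invoke Theorem \ref{LDPVn} to bound $P\bigl(\tfrac{1}{n}\sum_{i=1}^n V_i \ge x_0-\e\bigr)$ by $\exp(-n I_V(x_0-\e)+o(n))$. The LDP upper bound for the closed half-line $[x_0-\e,\infty)$ only gives $-\inf_{x\ge x_0-\e} I_V(x)$, and since $I_V$ is non-increasing with $\inf_x I_V(x)=0$, this is the trivial bound $\le 0$. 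In short, you are trying to control the \emph{right} tail of $\sum V_i$, where there is no exponential decay to exploit.

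The fix, which is what the paper does, is to reduce in the opposite direction. Because $I_T$ is non-increasing with $\inf_t I_T(t)=0$, it suffices to prove the upper bound for sets of the form $(-\infty,t]$: if $\sup F=\infty$ the bound is trivial, and if $\sup F=t^*<\infty$ then $F\subset(-\infty,t^*]$ and $\inf_{t\in F} I_T(t)=I_T(t^*)$. For such one-sided events the non-negativity of $\sum_i V_i^{(n)}$ works \emph{for} you rather than against you: $\{T_n\le nt\}\subset\{\sum_{i=1}^n V_i \le n(t-1)/2\}$, and Theorem \ref{LDPVn} applied to the closed set $(-\infty,(t-1)/2]$ gives exactly $-I_V((t-1)/2)=-I_T(t)$. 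No control on $\sum_i V_i^{(n)}$ is needed at all; in particular your sketch of a superexponential bound for that term (which would be delicate, since after the cookies are exhausted the underlying branching mechanism is critical, not subcritical) is unnecessary.
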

\begin{figure}
\includegraphics[width=6in]{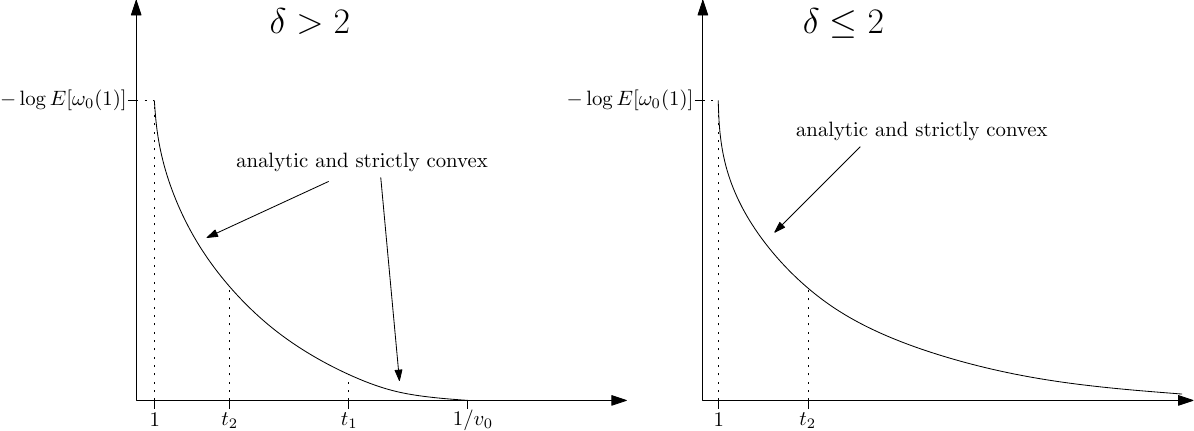}
\caption{A visual depiction of the rate function $I_T$ in the cases $\d>2$ and $\d \leq 2$ showing the qualitative properties stated in Lemma \ref{LDPTn}. 
}
\end{figure}
\begin{proof}
 The properties of the rate function $I_T$ follow directly from the corresponding properties of $I_V$ proved above in Lemmas \ref{infIVlem} and \ref{IVprops}. 
Note that when $\d>2$ we use that the formula for the limiting speed of the excited random walk in \eqref{speedformula} implies that $1/v_0 = E[\s_1 + 2 W_1]/E[\s_1] = 1+2m_0$.

Recall the relationship between the hitting times $T_n$ and the processes $V_i$ and $V_i^{(n)}$ given in \eqref{TnVirep}. 
Then, 
\[
 P( T_n/n \in G ) \geq P\left( 1 + \frac{2}{n} \sum_{i=1}^n V_i \in G, \, V_n = 0 \right), 
\]
since $V^{(n)}_{i} = 0$ for all $i\geq 1$ if $V_0^{(n)}=V_n=0$. The large deviation lower bound \eqref{LDPTnlb} then follows from Theorem \ref{LDPVn0}. 

Since $I_T$ is non-increasing and $\inf_t I_T(t) = 0$, as in the proof of Theorem \ref{LDPVn} the large deviation upper bound will follow from 
\begin{equation}\label{Tndecay}
 \limsup_{n\ra\infty} \frac{1}{n} \log P(T_n \leq nt) = -I_T(t). 
\end{equation}
Again, the relationship between the hitting times $T_n$ and the processes $V_i$ and $V_i^{(n)}$ given in \eqref{TnVirep} implies that
\[
 P(T_n \leq tn) \leq P\left( \sum_{i=1}^n V_i \leq \frac{(t-1)n}{2} \right),
\]
and Theorem \ref{LDPVn} implies that \eqref{Tndecay} holds. 
\end{proof}

To obtain a large deviation principle for the position of the excited random walk we will also need a large deviation principle for the hitting times to the left. However, this is obtained directly as a Corollary of Theorem \ref{LDPTn} by switching the direction of the cookie drifts. 
To be more precise, for any cookie environment $\w = \{\w_i(j)\}$, let $\overline{\w} = \{ \overline{\w}_i(j) \}$ be the associated cookie environment given by $\overline{\w}_i(j) = 1-\w_i(j)$.
Let $\overline{T}_n$ be the hitting times of the excited random walk in the cookie environment $\overline{\w}$. An obvious symmetry coupling gives $T_{-n} = \overline{T}_n$. 
\begin{cor}\label{LDPTnneg}
The random variables $T_{-n}/n$ satisfy a large deviation principle with convex rate function $I_{\overline{T}}$,
where $I_{\overline{T}}$ is the rate function given by Theorem \ref{LDPTn} for the hitting times $\overline{T}_n/n$. 
\end{cor}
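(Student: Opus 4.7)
The plan is to simply transfer the large deviation principle for $\overline{T}_n/n$ given by Theorem \ref{LDPTn} to $T_{-n}/n$ using the symmetry coupling mentioned just before the corollary statement. The essential observation is that $T_{-n}$ has the same law as $\overline{T}_n$, where $\overline{T}_n$ denotes the hitting time of $n$ for an excited random walk in the reflected cookie environment $\overline{\w}_i(j) = 1 - \w_i(j)$. This reduces the claim to verifying that Theorem \ref{LDPTn} is indeed applicable in the reflected environment.

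For this, I would first verify that the law of $\overline{\w}$ (under the pushforward of $\Pv$ by the map $\w \mapsto \overline{\w}$) satisfies Assumptions \ref{Mcookieasm}--\ref{easm}. Assumption \ref{Mcookieasm} is preserved because $\w_i(j) = 1/2$ if and only if $\overline{\w}_i(j) = 1/2$, so the same $M$ works. Assumption \ref{iidasm} is preserved because the map $\w \mapsto \overline{\w}$ acts coordinatewise at each site, so i.i.d.\ structure across sites is maintained. Assumption \ref{easm} is manifestly symmetric under the swap $\w \leftrightarrow 1-\w$: the two inequalities simply exchange roles. Hence Theorem \ref{LDPTn} yields that $\overline{T}_n/n$ satisfies an LDP with some convex rate function $I_{\overline{T}}$.

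Next, I would spell out the symmetry coupling to conclude $T_{-n} \overset{\mathcal{D}}{=} \overline{T}_n$ under the averaged measure. Using the ``coin tossing'' construction of Section \ref{Visec}, set $\overline{\xi}_{i,j} = 1 - \xi_{-i,j}$; then $\overline{\xi}_{i,j}$ is a Bernoulli variable with success probability $\overline{\w}_{-i}(j) \cdot \mathds{}$, no wait — more cleanly, given $(\w, \{\xi_{i,j}\})$ driving the walk $X_n$, the reflected walk $-X_n$ is driven by the reflected environment $\{1 - \w_{-i}(j)\}$ with trials $\{1 - \xi_{-i,j}\}$. Since $\Pv$ is i.i.d.\ across sites (Assumption \ref{iidasm}), the reindexing $i \mapsto -i$ is distribution-preserving, so $(-X_n)$ under $P$ has the same law as an excited random walk in environment $\overline{\w}$. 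In particular $T_{-n} = \inf\{k : X_k = -n\} = \inf\{k : -X_k = n\}$ has the same law as $\overline{T}_n$.

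Given the distributional identity $T_{-n}/n \overset{\mathcal{D}}{=} \overline{T}_n/n$, the LDP for $\overline{T}_n/n$ from Theorem \ref{LDPTn} transfers verbatim: for every open $G \subset \R$ and closed $F \subset \R$,
\[
\liminf_{n\to\infty} \frac{1}{n} \log P(T_{-n}/n \in G) \geq -\inf_{t\in G} I_{\overline{T}}(t), \qquad \limsup_{n\to\infty} \frac{1}{n} \log P(T_{-n}/n \in F) \leq -\inf_{t\in F} I_{\overline{T}}(t),
\]
with $I_{\overline{T}}$ convex by Theorem \ref{LDPTn}. The only subtle point, and arguably the main thing to be careful about, is checking that the distributional symmetry $T_{-n} \overset{\mathcal{D}}{=} \overline{T}_n$ is actually a statement at the level of the \emph{averaged} law (the quenched symmetry is trivial) — but this is immediate from the translation/reflection invariance of the i.i.d.\ measure $\Pv$.
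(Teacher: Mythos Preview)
Your proposal is correct and follows exactly the approach the paper takes: the paper simply asserts the ``obvious symmetry coupling'' $T_{-n} = \overline{T}_n$ in the sentence preceding the corollary and regards the result as immediate from Theorem~\ref{LDPTn}. Your write-up is more explicit than the paper's in checking that Assumptions~\ref{Mcookieasm}--\ref{easm} carry over to the reflected environment and in spelling out why the spatial reflection $i\mapsto -i$ is harmless under the i.i.d.\ law, but the underlying argument is the same.
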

\begin{rem}
 Since $\overline{\d} = E[ \sum_{j=1}^M (2\overline{\w}_0(j) - 1) ] = - E[ \sum_{j=1}^M (2\w_0(j) - 1) ] = - \d$, the properties of the rate function $I_{\overline{T}}$ are the same as the properties of the rate function $I_T$ given by Theorem \ref{LDPTn} when $\d $ is replaced by $-\d$. 
For instance, $I_{\overline{T}}(t)>0$ for all $t<\infty$ if $\d \geq -2$. 
\end{rem}

\section{Large deviations for the random walk}\label{LDPXsec}

In this section will show a large deviation principle for $X_n/n$. We begin by defining the rate function $I_X(x)$. 
\be\label{IXdef}
I_X(x) = 
\begin{cases}
 x I_T(1/x) & x > 0 \\
0 & x = 0 \\
|x| I_{\overline{T}}(1/|x|) & x < 0. 
\end{cases}
\ee
Before stating the large deviation principle for $X_n/n$ we will prove some simple facts about the rate function $I_X$. 
\begin{lem}\label{IXprop}
The function $I_X$ is non-negative and continuous on $[-1,1]$ and has the following additional properties
\begin{enumerate}
 \item $I_X(x)$ is non-increasing on $[-1,0]$ and non-decreasing on $[0,1]$. \label{monoprop}
 \item $I_X(x)$ is a convex function. \label{IXconvex}
 \item $I_X(-1) = -\log \Ev[1-\w_0(1)]$ and $I_X(1) = -\log \Ev[\w_0(1)]$. 
 \item There exist $\overline{x}_2 \in (-1,0)$ and $x_2 \in (0,1)$ such that $I_X$ is strictly convex and analytic on $(-1,\overline{x}_2)$ and $(x_2,1)$.  
 \item $\lim_{x \ra -1^+} I'_X(x) = -\infty$ and $\lim_{x \ra 1^-} I'_X(x) = \infty$. 
 \item $I'_X(0) = \lim_{x \ra 0} I_X(x)/x = 0$.
 \item If $\d \in [-2,2]$, then $I_X(x) = 0$ if and only if $x = 0$. 
 \item If $\d > 2$, then $I_X(x) = 0$ if and only if $x \in [0,v_0]$, and there exists an $x_1 \in (v_0, 1)$ such that $I_X$ is strictly convex and analytic on $(v_0,x_1)$ and continuously differentiable on $[0,x_1)$. 
 \item If $\d < -2$ then $I_X(x) = 0$ if and only if $x \in [v_0, 0]$, and there exists an $\overline{x}_1 \in (-1,v_0)$ such that $I_X$ is strictly convex and analytic on $(\overline{x}_1,v_0)$ and continuously differentiable on $(\overline{x}_1,0]$.. 
\end{enumerate}
\end{lem}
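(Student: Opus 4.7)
The plan is to deduce every property of $I_X$ from the corresponding property of $I_T$ (for $x>0$) or $I_{\overline T}$ (for $x<0$) established in Theorem~\ref{LDPTn}, using the identities $I_X(x)=x I_T(1/x)$ and $I_X(x)=|x|I_{\overline T}(1/|x|)$. Property (3) is then immediate from $I_T(1)=-\log\Ev[\w_0(1)]$ and the analogous statement for $\overline T$.

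For convexity (property (2)), I would first treat each half-interval separately using the ``perspective'' construction. Since $I_T$ is convex and lower-semicontinuous, we may write $I_T(t)=\sup_\lambda(\lambda t-\Lambda_T(\lambda))$, so for $x>0$
\[
x I_T(1/x)=\sup_\lambda\bigl(\lambda-x\Lambda_T(\lambda)\bigr),
\]
a supremum of affine functions of $x$ and hence convex. Equivalently, differentiating gives $I_X''(x)=x^{-3}I_T''(1/x)\geq 0$ wherever $I_T$ is twice differentiable. To extend convexity through $x=0$, I would verify that the one-sided derivatives match: the right derivative is $\lim_{x\to 0^+}I_X(x)/x=\lim_{t\to\infty}I_T(t)=0$ by Theorem~\ref{LDPTn} (in the $\d>2$ case $I_T$ vanishes eventually; otherwise $I_T$ is non-increasing with infimum $0$), and symmetrically for $I_{\overline T}$. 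This simultaneously establishes (6). Since $I_X\geq 0$, the left derivative at $0$ is $\sup_{x<0}I_X(x)/x\leq 0$, so matching with the right value $0$ forces both one-sided derivatives to vanish and yields convexity on all of $[-1,1]$. Property (1) then follows from convexity together with $I_X(0)=0=\min I_X$: a convex function attaining its minimum at $0$ is monotone on each side.

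For (4) and (5), I would transport the analyticity and strict convexity of $I_T$ on $(1,t_2)$ through the composition $x\mapsto xI_T(1/x)$, obtaining analyticity and strict convexity on $(x_2,1)$ with $x_2=1/t_2$; the identity $I_X''(x)=x^{-3}I_T''(1/x)$ gives strict convexity, and
\[
\lim_{x\to 1^-}I_X'(x)=I_T(1)-\lim_{t\to 1^+}I_T'(t)=+\infty
\]
by property (2) of Theorem~\ref{LDPTn}. The corresponding statements on $(-1,\overline x_2)$ are handled identically via $I_{\overline T}$.

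For the zero-set statements (7)--(9), I would invoke Theorem~\ref{LDPTn}(4) together with the observation $\overline\d=-\d$. If $\d\in[-2,2]$, then both $\d\leq 2$ and $\overline\d\leq 2$, so $I_T(t),I_{\overline T}(t)>0$ for every finite $t$, hence $I_X(x)>0$ for $x\neq 0$. If $\d>2$, then $I_T(t)=0$ iff $t\geq 1/v_0$, so $I_X(x)=xI_T(1/x)=0$ iff $x\in(0,v_0]$, while $\overline\d<-2\leq 2$ forces $I_X(x)>0$ for $x<0$; together with $I_X(0)=0$ this gives the zero set $[0,v_0]$. The interval $(v_0,x_1)$ of strict convexity/analyticity is inherited from $(t_1,1/v_0)$ with $x_1=1/t_1$. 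Continuous differentiability on $[0,x_1)$ follows because
\[
\lim_{x\to v_0^+}I_X'(x)=I_T(1/v_0)-\tfrac{1}{v_0}\lim_{t\to(1/v_0)^-}I_T'(t)=0,
\]
matching the constant value $0$ of $I_X'$ on $[0,v_0]$. The case $\d<-2$ is symmetric. The only real subtlety is the matching of one-sided derivatives at the transition points $x=0$ and $x=v_0$; both reduce to the boundary behavior of $I_T$ at $\infty$ and at $1/v_0$ supplied by Theorem~\ref{LDPTn}, so there is no serious obstacle.
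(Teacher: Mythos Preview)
Your proposal is correct and follows essentially the same strategy as the paper: deduce all properties of $I_X$ from the corresponding properties of $I_T$ and $I_{\overline T}$ in Theorem~\ref{LDPTn} via the change of variables $t=1/x$. The paper's proof is in fact terser than yours---it declares that all properties except (ii) are immediate and only writes out the convexity argument.

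The one structural difference is in how convexity across $x=0$ is obtained. The paper first establishes monotonicity (i) directly from $I_T$ being non-increasing, then uses monotonicity together with continuity at $0$ to glue the two convex halves (non-increasing on $[-1,0]$ forces the left derivative at $0$ to be $\le 0$; non-decreasing on $[0,1]$ forces the right derivative to be $\ge 0$). You instead compute the one-sided derivatives at $0$ explicitly as $\lim_{t\to\infty}I_T(t)=0$ and $\lim_{t\to\infty}I_{\overline T}(t)=0$, glue via these, and then \emph{derive} monotonicity (i) from global convexity and the fact that the minimum is at $0$. Both orderings are valid; your route has the small advantage of yielding (vi) simultaneously rather than as a separate verification, while the paper's route avoids invoking $\inf_t I_T(t)=0$ at this stage.
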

\begin{figure}
\includegraphics[width=6in]{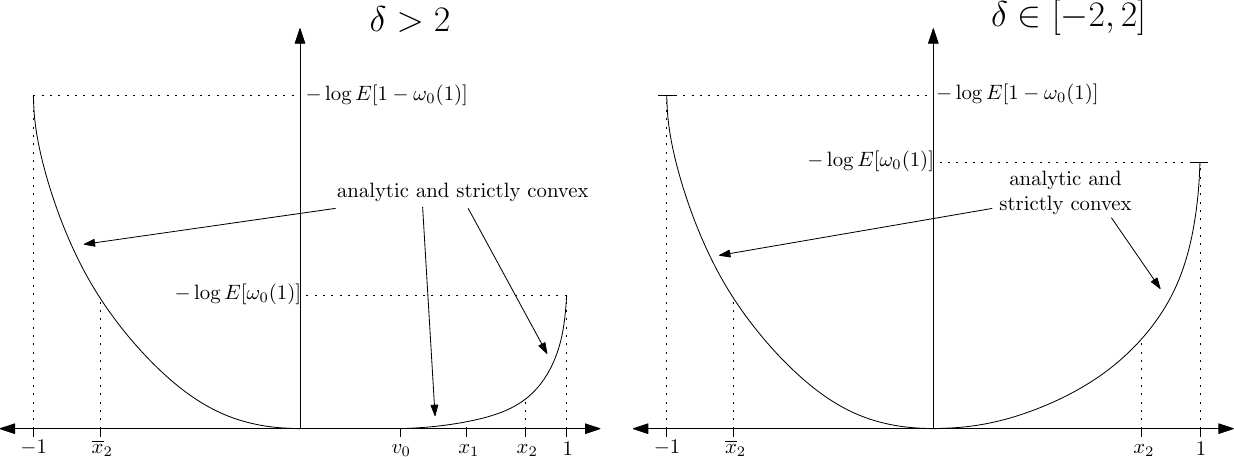}
\caption{A visual depiction of the rate function $I_X$ in the cases $\d>2$ and $\d \in [-2,2]$ showing the qualitative properties stated in Lemma \ref{IXprop}. 
\label{IXfig}}
\end{figure}
\begin{proof}
 Most of the properties in the statement of the Lemma follow directly from the corresponding properties of $I_T$ (or $I_{\overline{T}}$) given by Theorem \ref{LDPTn}, and thus we will content ourselves with only discussing property \ref{IXconvex} from the statement of the Lemma. 

It is a general fact of convex analysis that if $f(x)$ is a convex function on $[1,\infty)$ then $g(x) = x f(1/x)$ is also a convex function on $(0,1]$. Therefore, the convexity of $I_T$ and $I_{\overline{T}}$ imply that $I_X$ is convex on $[-1,0)$ and $(0,1]$, respectively. 
Next, note that $\lim_{x \ra 0^+} I_X(x) = \lim_{x\ra 0^+} x I_T(1/x) = 0$ since $I_T$ is finite and non-increasing, and similarly $\lim_{x\ra 0^-} I_X(x) = 0$. Therefore, $I_X$ is continuous at $x=0$ which in turn implies that $I_X$ is convex on $[-1,0]$ and $[0,1]$, respectively. 
Finally, the convexity of $I_X$ on all of $[-1,1]$ follows from the convexity on $[-1,0]$ and $[0,1]$ and the monotonicity properties in in part \ref{monoprop} of the lemma.
\end{proof}

We now are ready to prove the large deviation principle for the position of the excited random walk. 

\begin{proof}[Proof of Theorem \ref{LDPXn}]
Since the rate function $I_X$ is non-increasing on $[-1,0)$, non-decreasing on $(0,1]$, and $I_X(0) = 0$ it is enough to prove the large deviation upper bound for closed sets of the form $F=[x,1]$ with $x>0$ or $F=[-1,x]$ with $x<0$. 
To this end, let $x>0$ and note that $\{ X_n \geq nx \} \subset \{ T_{\lceil nx \rceil} \leq n \}$. Then, 
Theorem \ref{LDPTn} implies that
\begin{align*}
 \limsup_{n\ra\infty} \frac{1}{n} \log P(X_n \geq nx) &\leq 
\limsup_{n\ra\infty} \frac{1}{n} \log P( T_{\lceil nx \rceil} \leq n ) = -x I_T(1/x)
, \quad \forall x \in (0,1].  
\end{align*}
Similarly, if $x<0$ then $\{X_n \leq nx \} \subset \{ T_{ -\lceil n|x| \rceil} \leq n \}$ and Corollary \ref{LDPTnneg} implies that 
\begin{align*}
 \limsup_{n\ra\infty} \frac{1}{n} \log P(X_n \leq nx) &\leq 
\limsup_{n\ra\infty} \frac{1}{n} \log P( T_{-\lceil n|x| \rceil} \leq n )
= -|x| I_{\overline{T}}(1/|x|)
, \quad \forall x \in [-1,0).
\end{align*}
Recalling the definition of $I_X(x)$ in \eqref{IXdef} and the monotonicity properties of $I_X$ in Lemma \ref{IXprop} finishes the proof of the large deviation upper bound. 

To prove the large deviations lower bound it is enough to show that
\be\label{locallb}
\lim_{\e\ra 0^+} \liminf_{n\ra\infty} \frac{1}{n} \log P( |X_n - nx| < \e n ) \geq -I_X(x), \quad \forall x \in [-1,1],
\ee
First consider the case where $x \in (0,1]$. Then, since the random walk is a nearest neighbor walk
\[
 P( |X_n - nx| < \e n ) \geq P( | T_{\lceil nx \rceil} - n | < \e n - 1).
\] 
Then, Theorem \ref{LDPTn} implies that for any $x \in (0,1]$,
\begin{align*}
 \lim_{\e\ra 0^+} \liminf_{n\ra\infty} \frac{1}{n} \log P( |X_n - nx| < \e n ) 
& \geq  \lim_{\e\ra 0^+} \liminf_{n\ra\infty} \frac{1}{n} \log P( | T_{\lceil nx \rceil} - n | < \e n - 1)\\
&\geq -x I_T(1/x) = -I_X(x), 
\end{align*}
and a similar argument shows that \eqref{locallb} also holds for $x \in [-1,0)$. 
Finally, to show that \eqref{locallb} holds when $x= 0$ note that 
the naive slowdown strategy in Example \ref{slowdownex} implies that 
 $P(|X_n| \leq n^{1/3}) \geq C e^{-c n^{1/3}}$ and thus
\[
 \lim_{\e\ra 0^+} \liminf_{n\ra\infty} \frac{1}{n} \log P( |X_n| < \e n ) = 0 = -I_X(0). 
\]

\end{proof}

\section{Slowdowns}\label{Slowdownsec}

If $\d>2$, then Lemma \ref{IXprop} shows that the rate function $I_X$ is zero in the interval $[0,v_0]$. Thus, probabilities such as $P(X_n < nx)$ decay to zero sub-exponentially for $x \in (0, v_0)$. 
Similarly, since $I_T$ is zero in $[1/v_0,\infty)$ probabilities of the form $P(T_n > nt)$ decay sub-exponentially if $t > 1/v_0$. 
The main goal of this section is to prove Theorem \ref{Slowdownthm} which gives the correct polynomial rate of decay for these probabilities.

In order to prove Theorem \ref{Slowdownthm} we will need the following bound on backtracking probabilities for transient excited random walks. 
\begin{lem}\label{backtracklem}
 Let $\d>1$. Then there exists a constant $C>0$ such that for any $n,r\geq 1$,
\[
 P\left( \inf_{k\geq T_{n+r}} X_k \leq n \right) \leq C r^{1-\d}.
\]
\end{lem}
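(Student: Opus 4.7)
The proof should rest on the polynomial tail $P(\sigma_1 > x) \sim C_1 x^{-\delta}$ from \eqref{Wstails} combined with an excursion decomposition of the walk after $T_{n+r}$.

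Since $\delta > 1$, the walk is a.s.\ transient to $+\infty$ and so hits each of the levels $n+r,\,n+r+1,\,n+r+2,\ldots$ in turn. On the interval $[T_{n+r+j},\,T_{n+r+j+1})$ the walk is confined to $\{\ldots,n+r+j\}$, giving the decomposition
\[
\left\{ \inf_{k \geq T_{n+r}} X_k \leq n \right\} \;=\; \bigcup_{j\geq 0} E_j, \qquad E_j := \left\{ \inf_{k \in [T_{n+r+j},\, T_{n+r+j+1})} X_k \leq n \right\}.
\]
A union bound yields $P(\inf_{k \geq T_{n+r}} X_k \leq n) \leq \sum_{j\geq 0} P(E_j)$, reducing the claim to estimating each $P(E_j)$.

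For fixed $j$, at time $T_{n+r+j}$ the walker sits at its current maximum and the cookies at sites $\geq n+r+j$ are still completely fresh. Mimicking the coin-tossing construction of $V_i$, I would introduce a ``reversed'' auxiliary process $(Y^{(j)}_i)_{i\geq 0}$, where $Y^{(j)}_i$ records the number of down-crossings of the edge $\{n+r+j-i,\,n+r+j-i+1\}$ during the excursion $[T_{n+r+j},T_{n+r+j+1})$. Then $E_j = \{Y^{(j)}_{r+j}\geq 1\}$, and in particular $E_j \subseteq \{Y^{(j)}_i > 0 \text{ for all } 1 \leq i \leq r+j\}$, i.e., a non-extinction event for a process of the $V$-type. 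The key claim is that there is a constant $C$, independent of $n,r,j$, with
\[
P(E_j) \;\leq\; C\, P(\sigma_1 > r+j).
\]
One expects this because, by Assumption \ref{Mcookieasm}, only the first $M$ cookies per site can deviate from $1/2$; the partial consumption of cookies at sites $< n+r+j$ before $T_{n+r+j}$ is thus a finite-range perturbation that should be absorbable into the constant $C$ via a coupling with a genuine $V_i$ fed by a fresh i.i.d.\ family of Bernoulli trials. Given this bound, \eqref{Wstails} and $\delta>1$ yield
\[
P\!\left(\inf_{k\geq T_{n+r}} X_k \leq n\right) \;\leq\; \sum_{j\geq 0} P(E_j) \;\leq\; C' \sum_{j\geq 0} (r+j)^{-\delta} \;\leq\; C''\, r^{1-\delta},
\]
as claimed; note that it is the summation over $j$ that turns the $r^{-\delta}$ tail of $\sigma_1$ into the advertised $r^{1-\delta}$.

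The principal obstacle is precisely the comparison $P(E_j) \leq C\,P(\sigma_1 > r+j)$: the Bernoulli trials driving $(Y^{(j)}_i)$ are a shifted suffix of those underlying $V_i$, with the shift depending on the walk's prior history, so one must verify that this depletion leaves the polynomial tail unchanged up to constants. A possible alternative route, which avoids constructing a separate auxiliary process for every $j$, is to invoke the regeneration structure $\{\sigma_k\}$ of $V_i$ directly: by the renewal structure the overshoot past a regeneration inherits the tail of $\sigma_1$, and a uniform estimate of the overshoot distribution combined with \eqref{Wstails} yields the desired bound after summing.
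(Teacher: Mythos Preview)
Your excursion decomposition is a reasonable first instinct, but as you yourself flag, the comparison $P(E_j)\le C\,P(\sigma_1>r+j)$ is the whole difficulty and you have not resolved it. The obstacle is real: by time $T_{n+r+j}$ the cookie stacks at sites $<n+r+j$ have been partially consumed, and since cookies may carry drifts of either sign there is no monotone coupling saying that depletion only helps or only hurts the backtracking event. One could hope to absorb the discrepancy (at most $M$ Bernoulli trials per site) into a constant, but making this uniform in $j$ and in the random history up to $T_{n+r+j}$ is not automatic, and neither your main line nor the vague ``alternative route'' at the end supplies the argument.

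The paper sidesteps this entirely with a different and much shorter proof. Rather than slicing time into excursions, it observes that if $\inf_{T_{n+r}\le k< T_m}X_k\le n$ then every edge $\{i-1,i\}$ with $i\in[n+1,n+r]$ is crossed leftward at least once before $T_m$, i.e.\ $U_i^m\ge 1$ for all such $i$. Now use the \emph{exact} distributional identity \eqref{UiVirep}: this event has the same probability as $\{V_i\ge 1\text{ for all }i\in[m-n-r,\,m-n-1]\}$, a window of length $r$ in the $V$-chain containing no regeneration. Letting $m\to\infty$ and invoking the stationary residual-life distribution of the renewal sequence $\{\sigma_k\}$ (valid since $E[\sigma_1]<\infty$ when $\delta>1$) gives
\[
P\Bigl(\inf_{k\ge T_{n+r}}X_k\le n\Bigr)\;\le\;\frac{E[(\sigma_1-r)_+]}{E[\sigma_1]},
\]
and the tail $P(\sigma_1>x)\sim C_1x^{-\delta}$ yields $E[(\sigma_1-r)_+]=\int_r^\infty P(\sigma_1>u)\,du\le C r^{1-\delta}$. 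The key point is that by working with $U^m$ for a single large $m$, the paper never has to compare a depleted environment with a fresh one: the identity \eqref{UiVirep} already delivers a genuine copy of $V$, and the exponent $1-\delta$ comes from integrating the tail of $\sigma_1$ rather than from summing a series in $j$.
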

\begin{rem}
In \cite{bsRGCRW}, Basdevant and Singh showed that such backtracking probabilities could be bounded uniformly in $n$ by a term that vanishes as $r\ra \infty$. However, their argument uses an assumption of non-negativity of the cookie strengths, and their bounds do not give any information on the rate of decay of the probabilities in $r$. 
Our argument is more general (allowing positive and negative cookie drifts) and gives a quantitative rate of decay in $r$. 
\end{rem}

\begin{proof}
First, note that 
\be\label{limTm}
  P\left( \inf_{k\geq T_{n+r}} X_k \leq n \right) = \lim_{m\ra\infty} P\left( \inf_{T_{n+r} \leq k < T_m } X_k \leq n \right)
\ee
The event $\{ \inf_{T_{n+r} \leq k < T_m} X_k \leq n \}$ implies that for every site $i \in [n+1, n+r]$ the excited random walk jumps from $i$ to $i-1$
at least one time before time $T_m$. 
Therefore, 
\begin{align}
P\left( \inf_{T_{n+r} \leq k < T_m } X_k \leq n \right) 
&\leq P( U_i^m \geq 1, \, \forall i \in [n+1,n+r] ) \nonumber \\
&= P( V_i \geq 1, \, \forall i \in [ m-n-r, m-n-1] ). \label{TbackVgap}
\end{align}
Now, the asymptotic age distribution for a discrete renewal process (see Section 6.2 of \cite{lISP}) implies that for any $k\geq 1$
\[
 \lim_{m\ra\infty} P( V_i \neq 0 \, \text{for all } m < i \leq m + k ) = \frac{E[ (\s_1 - k )_+]}{E[\s_1]}. 
\]
Applying this to \eqref{limTm} and \eqref{TbackVgap} we obtain 
\[
 P\left( \inf_{k\geq T_{n+r}} X_k \leq n \right) \leq \frac{E[ (\s_1 - r )_+]}{E[\s_1]}. 
\]
The tail decay of $\s_1$ in \eqref{Wstails} implies that when $\d>1$ there exists a constant $C>0$ such that  $E[(\s_1 - r)_+] \leq C r^{1-\d}$ for any $r\geq 1$. 
\end{proof}

We will also need the following large deviation asymptotics for heavy tailed random variables. 
\begin{lem}\label{heavyLD}
 Let $\{Z_k\}_{k\geq 1}$ be i.i.d.\ non-negative random variables with $P(Z_1 > t) \sim C t^{-\k}$ for some $\k>1$ and $C>0$. Then, 
\[
 \lim_{n\ra\infty} \frac{ \log P\left( \sum_{k=1}^n Z_k > x n \right) }{\log n} = 1-\kappa, \quad \forall x > E[Z_1]. 
\]
\end{lem}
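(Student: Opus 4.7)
The plan is to prove matching upper and lower bounds by exploiting the \emph{one big jump} heuristic: a rare event of the form $\{S_n > xn\}$ with $S_n = \sum_{k=1}^n Z_k$ and $x > \mu := E[Z_1]$ is accomplished essentially by a single summand being anomalously large, whose probability is of order $n \cdot P(Z_1 > xn) \sim C x^{-\k} n^{1-\k}$.

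\textbf{Lower bound.} Since $Z_k \geq 0$, the event $\{\max_{1\le k\le n} Z_k > xn\}$ is contained in $\{S_n > xn\}$. By independence and the tail hypothesis,
\[
P(S_n > xn) \;\geq\; P\!\left(\max_{1\le k \le n} Z_k > xn\right) \;=\; 1 - \bigl(1 - P(Z_1 > xn)\bigr)^n.
\]
Since $\kappa > 1$, $nP(Z_1 > xn) \sim Cx^{-\kappa}n^{1-\kappa} \to 0$, so the right-hand side is $(1+o(1))\,n P(Z_1 > xn) \sim C x^{-\kappa} n^{1-\kappa}$. Taking logarithms and dividing by $\log n$ yields $\liminf_{n\to\infty} \log P(S_n > xn)/\log n \geq 1-\kappa$.

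\textbf{Upper bound.} Fix $\b \in (0,1)$ and set the truncation level $y = y_n = n^{1-\b}$. Write
\[
P(S_n > xn) \;\leq\; P\!\left(\max_{1\le k\le n} Z_k > y_n\right) + P\!\left(\sum_{k=1}^n (Z_k \wedge y_n) > xn\right).
\]
The first term is at most $n P(Z_1 > y_n) \sim C\, n^{1 - (1-\b)\k}$. For the second term, let $\tilde Z_k = Z_k \wedge y_n$. Dominated convergence and finiteness of $E[Z_1]$ give $\mu_n := E[\tilde Z_k] \to \mu < x$, so for $n$ large $x - \mu_n \geq (x-\mu)/2 =: c > 0$. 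A direct tail integration using $P(Z_1 > t) \sim C t^{-\k}$ yields $\text{Var}(\tilde Z_k) \leq C'\max(1, y_n^{2-\k}, \log y_n)$ depending on whether $\k > 2$, $\k \in (1,2)$, or $\k = 2$. Applying Bernstein's inequality to the bounded, centered variables $\tilde Z_k - \mu_n$:
\[
P\!\left(\sum_{k=1}^n (\tilde Z_k - \mu_n) > cn \right) \;\leq\; \exp\!\left(-\frac{(cn)^2/2}{n\,\text{Var}(\tilde Z_1) + \tfrac{1}{3} y_n \cdot cn}\right).
\]
A short calculation shows the denominator is dominated by $\tfrac{c}{3}y_n n = \tfrac{c}{3} n^{2-\b}$, so the exponent is $\leq -C'' n^{\b}$ for some $C'' > 0$. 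This bound is super-polynomial in $n$, hence negligible compared to the first term. Therefore
\[
\limsup_{n\to\infty} \frac{\log P(S_n > xn)}{\log n} \;\leq\; 1 - (1-\b)\k.
\]
Since $\b > 0$ is arbitrary, letting $\b \to 0^+$ yields the matching bound $1-\k$.

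\textbf{Main obstacle.} The only delicate point is the variance estimate for the truncated variables when $\k \in (1, 2]$, because $E[Z_1^2] = \infty$ and the variance of $\tilde Z_k$ diverges with $y_n$. The choice $y_n = n^{1-\b}$ with $\b$ small balances two opposing constraints: $y_n$ must be $o(n)$ so that Bernstein's inequality delivers super-polynomial decay for the truncated sum (the linear term $y_n \cdot cn$ must be $o(n^2)$), while $y_n$ must be close to $n$ so that the contribution from the excluded event $\{\max Z_k > y_n\}$ matches the desired $n^{1-\k}$ rate up to $n^{\b\k}$. Optimizing these via $\b \to 0$ is precisely what produces the sharp $1-\k$ exponent.
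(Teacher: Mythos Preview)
Your proof is correct and takes a genuinely different route from the paper's for the upper bound. The lower bound is essentially identical in both: use nonnegativity to get $P(S_n > xn) \ge 1-(1-P(Z_1>xn))^n \sim nP(Z_1>xn)$.

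For the upper bound, however, the paper does not argue from first principles. When $\k>2$ it cites a classical large-deviation result of Nagaev giving the sharp asymptotic $P(S_n>xn)\sim nP(Z_1>xn)$ directly; when $\k\in(1,2]$ it invokes a Baum--Katz type convergence-rate theorem, using that $E[Z_1^\gamma]<\infty$ for every $\gamma<\k$ to conclude $P(S_n>xn)=o(n^{1-\gamma})$, and then lets $\gamma\uparrow\k$. Your argument replaces both citations with a single self-contained truncation-plus-Bernstein scheme: truncate at $y_n=n^{1-\b}$, bound the probability that the maximum exceeds $y_n$ by a union bound, and kill the truncated-sum contribution with a concentration inequality whose exponent scales like $n^\b$. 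The trade-off is that the paper's route is shorter and, for $\k>2$, yields the exact asymptotic (not just the logarithmic scale), whereas your approach is elementary, uniform across all $\k>1$, and avoids any literature dependence---which is arguably more appropriate here since the lemma only asks for the $\log n$ scale.
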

\begin{rem}
 Lemma \ref{heavyLD} is not new, but we provide a quick proof here for the convenience of the reader since we could not find a statement of this lemma in the literature.  
\end{rem}
\begin{proof}
 The statement of the Lemma follows easily from \cite[equation (0.3)]{nLD} when $\k > 2$. Indeed, if $\k>2$ then in fact 
\begin{align*}
P\left( \sum_{k=1}^n Z_k > x n \right) &\sim n P\left(Z_1 - E[Z_1] > n(x-E[Z_1]) \right) \\
&\sim C (x-E[Z_1])^{-\k} n^{1-\k}, \quad \text{as } n \ra\infty,
\end{align*}
for any $x > E[Z_1]$.

When $\k \in (1,2]$ 
we can no longer use \cite[equation (0.3)]{nLD} and so a different approach is needed. 
To this end, first note that since the $Z_k$ are non-negative a simple lower bound is
\[
 P\left( \sum_{k=1}^n Z_k > x n \right) \geq P(\exists k\leq n: \, Z_k > xn) = 1-(1-P(Z_1 > xn))^n. 
\]
Since $1-(1-p)^n \geq np +(np)^2/2$ for any $n\geq 1$ and $p\in [0,1]$ this implies that 
\[
 P\left( \sum_{k=1}^n Z_k > x n \right) \geq n P(Z_1 > xn) + \frac{1}{2}  n^2 P(Z_1 > xn)^2 \sim C x^{-\k} n^{1-\k}. 
\]

To obtain a corresponding upper bound when $\k \in (1,2]$, note that $E[Z_1^\gamma] < \infty$ for any $\gamma \in (0,\k)$. Then, \cite{bROCOM} implies that $P(\sum_{k=1}^n Z_k > x n ) = o(n^{1-\gamma})$ for any $\gamma \in (0,\k)$ and any $x > E[Z_1]$, and this is enough to complete the proof of the lemma. 
\end{proof}

We are now ready to give the proof of the main result of this section. 
\begin{proof}[Proof of Theorem \ref{Slowdownthm}]
We first prove the polynomial rate of decay for the hitting time probabilities in \eqref{Tnslowdown}. 
Since  $\s_k$ and $W_k$ are sums of $k$ i.i.d.\ non-negative random variables with tail decay given by \eqref{Wstails}, Lemma \ref{heavyLD} implies that
\be\label{sNLD}
 \lim_{k\ra\infty} \frac{1}{\log k} P( \s_k > k y) = 1-\d, \quad \text{ if } y > E[\s_1],
\ee
and 
\be\label{WNLD}
\lim_{k\ra\infty} \frac{1}{\log k} P( W_k > k y) = 1-\d/2, \quad \text{ if } y > E[W_1].
\ee

Recall the relationship between the hitting times $T_n$ and the branching processes $V_i$ and $V_i^{(n)}$ given in \eqref{TnVirep}. 
Also, note that the branching process $V_i^{(n)}$ starts with $V_0^{(n)} = V_n$ and has the same offspring distribution as the branching process $V_i$ but without the extra immigrant each generation. Thus, $V_i^{(n)} = 0$ implies that $V_{j}^{(n)}$ for all $j\geq i$ and 
the processes are naturally coupled so that $V_i^{(n)} \leq V_{n+i}$ for all $i\geq 1$. 
Therefore, $T_n$ is stochastically dominated by $n + 2 \sum_{i=1}^{\s_{k(n)}} V_i = n + 2 W_{k(n)}$, where $k(n)$ is defined by $\s_{k(n)-1} < n \leq \s_{k(n)}$. 
Thus, for any $c>0$ 
\begin{align}
P(T_n > nt) &\leq P(k(n) > cn ) + P\left( W_{\fl{cn}} > \frac{n(t-1)}{2} \right)\nonumber \\
 &\leq P( \s_{\fl{cn}} < n ) + P\left( W_{\fl{cn}} > \frac{n(t-1)}{2} \right). \label{rd}
\end{align}
While \eqref{sNLD} implies that the right tail large deviations of $\s_k/k$ decay polynomially, the left tail large deviations decay exponentially since $\s_k$ is the sum of non-negative random variables (use Cramer's theorem). 
That is, 
\[
 \lim_{k\ra\infty} \frac{1}{k} \log P( \s_k < k y ) < 0, \quad \text{ if } y < E[ \s_1 ]. 
\]
Therefore, if we can choose $c$ such that $1/c< E[\s_1]$ and $(t-1)/(2c) > E[W_1]$ the first term in \eqref{rd} will decay exponentially in $n$ while the second term will decay polynomially on the order $n^{1-\d/2}$. 
The assumption that $t>1/v_0 = 1 + 2 E[W_1]/E[\s_1]$ implies that $(t-1)/2 > E[W_1]/E[\s_1]$ and so such a $c$ may be found. 

For a matching lower bound on the polynomial rate of decay of $P(T_n > nt)$, we again use the relationship between the hitting times and the branching process in \eqref{TnVirep} to obtain
\begin{align*}
 P(T_n > nt) 
&\geq P\left( \sum_{i=1}^n V_i > \frac{n(t-1)}{2}\right)\\
&\geq P\left( \exists k\leq n: W_k > \frac{n(t-1)}{2}, \, \s_k \leq n \right) \\
&\geq P\left( W_{c n} > \frac{n(t-1)}{2} \right) - P( \s_{c n} > n ) . 
\end{align*}
If $c < (E[ \s_1 ])^{-1}$ then the assumption that $t > 1/v_0$ implies that $(t-1)/(2c) > E[W_1]$, and so \eqref{WNLD} and \eqref{sNLD} imply that $P(T_n > nt) \geq n^{1-\d/2+o(1)} - n^{1-\d + o(1)} = n^{1-\d/2+o(1)}$. 
This completes the proof of \eqref{Tnslowdown}.

We now turn to the subexponential rate of decay for $P(X_n < xn)$.
A lower bound follows immediately from \eqref{Tnslowdown} since $P(X_n < xn) \geq P( T_{ \lceil xn \rceil } > n )$.
To obtain a corresponding upper bound, note that 
\begin{align}
 P(X_n < xn) 
&\leq P(T_{ \lceil n(x+\e) \rceil} > n ) +  P\left( \inf_{k > T_{\lceil n(x+\e) \rceil}} X_k < xn \right) \nonumber  \\
&\leq P(T_{ \lceil n(x+\e) \rceil} > n ) + C (n \e)^{1-\d}, \label{Xnslowdownub}
\end{align}
where the last inequality follows from Lemma \ref{backtracklem}. 
Now, if $\e>0$ is sufficiently small (so that $x+\e < v_0$) then \eqref{Tnslowdown} implies that the probability in \eqref{Xnslowdownub} is $n^{1-\d/2 + o(1)}$. 
Since $n^{1-\d/2}$ is much larger than $n^{1-\d}$ this completes the proof of the upper bound needed for \eqref{Xnslowdown}. 
\end{proof}

\textbf{Acknowledgements:}
We thank Nina Gantert for pointing out the reference \cite{nnMAP2} to us, Ofer Zeitouni for several helpful discussions regarding large deviations, and Elena Kosygina for helpful discussions regarding the process $V_i$ and the branching process with migration literature.

\bibliographystyle{alpha}
\bibliography{CookieRW}

\end{document}